\theoremstyle{plain}  \newtheorem{theorem}{Theorem}[section]
\newtheorem{lemma}[theorem]{Lemma}
\newtheorem{proposition}[theorem]{Proposition}
\newtheorem{hypothesis}[theorem]{Hypothesis}
\newtheorem{corollary}[theorem]{Corollary} 
\newtheorem{definition}[theorem]{Definition} \theoremstyle{remark}
\newtheorem{remark}[theorem]{Remark}
\newcommand{\Rc}{\mathcal{R}}
\newcommand{\diag}{\operatorname{diag}}
\newcommand{\be}{\begin{equation}}
\newcommand{\ee}{\end{equation}}
\newcommand{\ab}{\boldsymbol{a}}
\newcommand{\bb}{\boldsymbol{b}}
\newcommand{\dd}{\mathrm{d}}
\newcommand{\Hc}{\mathcal{H}}
\newcommand{\M}{\mathcal{M}}
\newcommand{\jb}{{\boldsymbol{j}}}
\newcommand{\kb}{{\boldsymbol{k}}}
\newcommand{\ellb}{{\boldsymbol{\ell}}}
\renewcommand{\Mc}{\mathcal{M}}
\newcommand{\R}{\mathbb{R}}
\newcommand{\C}{\mathbb{C}}
\newcommand{\T}{\mathbb{T}}
\newcommand{\Z}{\mathbb{Z}}
\newcommand{\Sc}{\mathcal{S}}
\newcommand{\Uc}{\mathcal{U}}
\newcommand{\U}{\mathbb{U}}
\newcommand{\eps}{\varepsilon}
\newcommand{\om}{\omega}
\newcommand{\Norm}[2]{\|#1\|\left.\vphantom{T_{j_0}^0}\!\!\right._{#2}}
\author{Joackim Bernier }
\address{Univ Rennes, INRIA, CNRS, IRMAR - UMR 6625, F-35000 Rennes, France} 
\email{Joackim.Bernier@ens-rennes.fr}
\author{Erwan Faou}
\address{Univ Rennes, INRIA, CNRS, IRMAR - UMR 6625, F-35000 Rennes, France} 
\email{Erwan.Faou@inria.fr}
 \author{ Beno\^it Gr\'ebert}
\address{Laboratoire de Math\'ematiques Jean Leray, Universit\'e de Nantes, UMR CNRS 6629\\
2, rue de la Houssini\`ere \\
44322 Nantes Cedex 03, France}
\email{benoit.grebert@univ-nantes.fr}
\title[Long time behavior of NLW]{Long time behavior of the  solutions of NLW  on the d-dimensional torus}
\begin{document}

\begin{abstract}
We consider the non linear wave equation (NLW) on the $d$-dimensional torus  $\T^d$ with a smooth nonlinearity of order at least two at the origin. We prove  that, for almost any mass, small and smooth solutions of high Sobolev indices are stable up to arbitrary long times with respect to the size of the initial data.
To prove this result we use a normal form transformation decomposing the dynamics into low and high frequencies with weak interactions. While the low part of the dynamics can be put under classical Birkhoff normal form, the high modes evolve according to a time dependent linear Hamiltonian system. We then control the global dynamics by using polynomial growth estimates for high modes and the preservation of Sobolev norms for the low modes. 
Our general strategy applies to any semi-linear Hamiltonian PDEs whose linear frequencies satisfy a very general non resonance condition.
The (NLW) equation on $\T^d$ is a good example  since the standard Birkhoff normal form applies only when $d=1$ while our strategy applies in any dimension. 

\end{abstract}

\subjclass[2000]{35L05, 37K55, 35B40, 35Q55}

\keywords{Wave equation, Birkhoff normal form, Resonances, Hamiltonian PDEs}

\maketitle

\tableofcontents

\section{Introduction}

Let us consider the nonlinear wave equation set on the $d$-dimensonal torus $\T^d = (\R / 2\pi \Z)^d$ with $d \geq 1$, 
\begin{equation}\tag{NLW}\label{nlw}u_{tt} - \Delta u + mu + f(u) =0,\quad x\in\T^d,\end{equation}
satisfied by a real valued function $u(t,x)$ 
with given initial data $u(0)\equiv u(0,\cdot)$ and $\dot u(0)\equiv \dot u(0,\cdot) = \partial_t u(0,\cdot)$. 
The function
$f \in \mathcal{C}^\infty(\R,\R)$ is at least of order 2 at the origin, {\em i.e.} $f(0) = f'(0) = 0$. 
For small and smooth initial data $u(0) \in H^s(\T^d)$ and $\dot u(0) \in H^{s-1}(\T^d)$ with large $s$, we are interested in a description of the long time behavior of $u(t)\equiv u(t,\cdot)$ solution of \eqref{nlw}. 

In dimension $d = 1$, it is known that if $\varepsilon$ measures the size of the initial data, the solution is controlled for arbitrary polynomial times with respect to $\varepsilon$, and for almost all $m$  away from zero. 
More precisely, it has been proved (see for instance \cite{BG06} and also \cite{Bam03})
\begin{theorem}(\cite{BG06}) Let $d=1$ and $r\geq2$. For almost any $m >0$  there exists $s_*(r)$ such that for $s > s_*(r) $ there exists $\varepsilon_0(r,s)>0$ such that for all $\varepsilon < \varepsilon_0$
\begin{equation}
\label{nlwd1}
\Norm{(u(0),\dot u(0))}{H^s \times H^{s-1}} \leq \varepsilon\,\quad \Longrightarrow \Norm{(u(t),\dot  u(t))}{H^s \times H^{s-1}} \leq 2 \varepsilon ,\quad t \leq \varepsilon^{-r}. 
\end{equation}
\end{theorem}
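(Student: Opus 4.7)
The plan is to apply a Birkhoff normal form procedure near the origin in $H^s(\T)\times H^{s-1}(\T)$, relying on a non-resonance property of the linear frequencies $\omega_j=\sqrt{j^2+m}$, $j\in\Z$, valid for almost every mass $m$. First, I would put \eqref{nlw} in Hamiltonian form: setting $v=\partial_t u$, the system is Hamiltonian for $(u,v)$ with $H=\tfrac12\int(v^2+u_x^2+mu^2)\,\dd x + \int F(u)\,\dd x$ where $F'=f$. Diagonalizing the quadratic part in Fourier yields symplectic complex coordinates $(\xi_j,\eta_j)_{j\in\Z}$, $\eta_j=\bar\xi_j$, in which $H_0=\sum_j \omega_j \xi_j\eta_j$ and the nonlinearity becomes a smooth function $P(\xi,\eta)$ vanishing at order $3$; moreover $\Norm{(u,v)}{H^s\times H^{s-1}}^2$ is equivalent to $\sum_j \langle j\rangle^{2s}|\xi_j|^2$.

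Second, I would prove a Diophantine non-resonance estimate: for almost every $m>0$ and every $r$, there exist $\gamma,\alpha>0$ such that for all $N\leq r+2$, all signs $\varepsilon_i\in\{\pm1\}$ and all $(j_1,\dots,j_N)\in\Z^N$ for which $\sum \varepsilon_i\omega_{j_i}$ does not vanish identically in $m$,
\be\label{nonres}
\Big|\sum_{i=1}^N \varepsilon_i \omega_{j_i}\Big|\geq \gamma\,\mu(j_1,\dots,j_N)^{-\alpha},
\ee
where $\mu(\cdot)$ denotes the third largest of $|j_1|,\dots,|j_N|$. This would follow from a Borel--Cantelli argument using the analytic, non-constant dependence of $\omega_j$ on $m$; crucially, in dimension $1$, the asymptotic $\omega_j=|j|+O(1/|j|)$ allows the two largest indices to be pre-cancelled, so only the third largest enters $\mu$.

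Third, using \eqref{nonres} together with an algebra of polynomial Hamiltonians stable under Poisson bracket and enjoying tame Sobolev estimates (with a tolerated loss of some power of $\mu$), I would iteratively solve homological equations $\{H_0,\chi_k\}=Q_k-\langle Q_k\rangle$ to construct, for $s>s_*(r)$ large enough, a smooth canonical transformation $\tau$, close to the identity on a neighborhood of $0$ in the $s$-weighted phase space, such that
\be
H\circ\tau = H_0+Z+R,
\ee
where $Z$ is a polynomial of degree at most $r+2$ depending only on the actions $I_j=\xi_j\eta_j$, and the Hamiltonian vector field of $R$ satisfies $\Norm{X_R(\xi,\eta)}{s}\leq C\Norm{(\xi,\eta)}{s}^{r+2}$.

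Finally, since $H_0+Z$ depends only on the actions, its flow preserves each $I_j$ and hence the $s$-weighted norm. Writing the flow of $H$ as $\tau\circ\phi^t_{H_0+Z+R}\circ\tau^{-1}$ and using $\tau^{\pm1}=\mathrm{id}+O(\Norm{\cdot}{s}^2)$, a standard continuation argument gives a drift of size $O(\varepsilon^{r+2}t)$ on the Sobolev norm on $[-\varepsilon^{-r},\varepsilon^{-r}]$, yielding \eqref{nlwd1}. The main obstacle is the third step: identifying a class of polynomials closed under Poisson bracket and under the resolution of the homological equation, whose tame Sobolev estimates can absorb the loss $\mu^\alpha$ coming from \eqref{nonres}; this is what forces $s_*(r)$ to grow with $r$.
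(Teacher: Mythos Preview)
The paper does not prove this statement: it is quoted from \cite{BG06} as background, and the surrounding discussion only identifies the crucial ingredient, namely the non-resonance condition \eqref{mu3} controlled by the \emph{third} largest index, which holds in dimension $d=1$ for almost every $m$ because $\omega_j-|j|\to 0$. Your proposal is precisely the Birkhoff normal form strategy of \cite{BG06} and matches what the paper sketches; so in that sense your outline is correct and aligned with the intended argument.

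One point to tighten: after normalization the polynomial $Z$ is \emph{not} in general a function of the individual actions $I_j=\xi_j\bar\xi_j$, but only of the super-actions $J_n=\sum_{|j|=n}|\xi_j|^2$ (compare \eqref{eqRm}, \eqref{dansleZ} and \eqref{commuteboy}); resonant monomials pair indices with the same $|j|$, not the same $j$. This does not affect the conclusion, since $\sum_j\langle j\rangle^{2s}|\xi_j|^2=\sum_n\langle n\rangle^{2s}J_n$ is still conserved by the flow of $H_0+Z$, but the statement ``$Z$ depends only on the actions $I_j$'' should be replaced by ``$Z$ commutes with all the super-actions $J_n$''. With that correction, your sketch is the standard proof.
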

The crucial tool to obtain this result is to show that for a large set of parameters $m$, the frequencies $\omega_j = \sqrt{|j|^2 + m}$ of the linear wave operator satisfy a non-resonance condition of the following form: \\
Fix $r\geq3$, there exists $\gamma >0$ such that
for $\kb = (k_1,\cdots,k_p) \in (\Z^d)^p$, $\ellb = (\ell_1,\cdots,\ell_q) \in (\Z^d)^q$ with $p+q\leq r$ we have
\begin{equation}
\label{mu3}
\tag{{$\mu_3$}}
| \omega_{k_1} + \cdots + \omega_{k_p} - \omega_{\ell_1} - \cdots - \omega_{\ell_{q}}| \geq \frac{\gamma}{\mu_3(\kb,\ellb)^{\alpha}}, 
\end{equation}
unless $(|k_1|,\cdots,|k_p|)$ and $(|\ell_1|,\cdots,|\ell_q|)$ are equal up to a permutation,
and where $\mu_3(\kb,\ellb)$ denotes the third largest number amongst the collection $(|k_i|,|\ell_j|)_{i,j}$, and $\alpha$ depends on $r$.  \\
This condition, introduced in \cite{BG06}, allows to eliminate (or normalize) all the terms in the Hamiltonian of the perturbation (depending on $F$, a primitive of $f$ in \eqref{nlw}) involving at most two high Fourier modes via a Birkhoff normal form procedure.  On the other hand, it is known since  \cite{Bam03} that we can neglect  all the monomials involving more than three high modes (see for instance \cite{Gre07} or \cite{Bam07} for a simple presentation of these two facts). So once we have \eqref{mu3} we can expect a control of the Sobolev norms similar to \eqref{nlwd1}.\\   
Notice that  \eqref{mu3} is close from
the so-called {\em second-order} Melnikov non-resonance condition\footnote{This terminology refers to the original papers \cite{Mel65,Mel68} where similar conditions where introduced for proving the stability of low-dimensional invariant tori in Hamiltonian dynamics, and popularized in the KAM literature \cite{Mos67,Eli88, Bou97,XY01} and later while extending these results to Hamiltonian PDEs \cite{Kuk93,Pos96}.} that says in a formulation allowing comparison with \eqref{mu3}:\\
Fix $n\geq 2$, there exists $\gamma >0$ such that
for $\kb = (k_1,\cdots,k_p) \in (\Z^d)^p$, $\ellb = (\ell_1,\cdots,\ell_q) \in (\Z^d)^q$ with $|k_i|,|\ell_i|\leq n$  and  for $ (j_1,j_2) \in (\Z^d)^2$ with $|j_1|,|j_2|>n$
$$
| \omega_{k_1} + \cdots + \omega_{k_p} - \omega_{\ell_1} - \cdots - \omega_{\ell_{q}}+ \omega_{j_1} - \omega_{j_{2}}| \geq \frac{\gamma}{r^{\alpha}}, 
$$
unless  $(|k_1|,\cdots,|k_p|)$ and $(|\ell_1|,\cdots,|\ell_q|)$ are equal up to a permutation,
where $r=p+q$.  \\
We note that in the Melnikov case, the ``length" of the resonance ($r+2$) is free but the number of ``interior" modes (here $(2n+1)^d$) is fixed while it is exactly the converse in \eqref{mu3}. So the two conditions are not equivalent but similar. 

The condition \eqref{mu3} applies to many situations, including the one dimensional wave equations, the one dimensional nonlinear Schr\"odinger equation with an external potential \cite{Bam03,Bam07,BG06}, the $d$-dimensional nonlinear Schr\"odinger equation with a convolution potential \cite{BG03,BG06},  plane waves stability for non-linear Schr\"odinger equation \cite{FGL13},  wave equations on Zoll manifolds \cite{BDGS07} or  quantum harmonic oscillator on $\R^d$ \cite{GIP09}. 

The main difficulty of the higher dimensional case for \eqref{nlw} is that the frequencies do not satisfy the second-order Melnikov condition for a large set of parameters $m$, as already noted for instance in \cite{Bou95,Del09}. In fact, in dimension $d \geq 2$  \eqref{mu3} is not satisfied either: let for instance $|k_1| \geq |\ell_1| \geq \mu_3(\kb,\ellb)$ then in the left hand side of  \eqref{mu3} the quantity $\omega_{k_1} - \omega_{\ell_1}$ depends on indices that are not controlled by  $\mu_3(\kb,\ellb)$.
For $d = 1$, if $k_1$ and $\ell_1$ are large, the difference $\omega_{k_1} - \omega_{\ell_1}$ is close to be an integer and the parameter $m$ can be chosen in a large set so that \eqref{mu3} holds. However, in dimension  $d \geq 2$, $\omega_{k_1} - \omega_{\ell_1}$ describes a dense set at the scale $|k_1|$ which prevents \eqref{mu3} to hold. Actually we can only prove that for a large set of parameters $m$, 
the following condition (which is related to the so-called {\em first-order} Melnikov condition) holds
\begin{equation}
\label{mu2}
\tag{$\mu_2$}
| \omega_{k_1} + \cdots + \omega_{k_p} - \omega_{\ell_1} - \cdots - \omega_{\ell_{q}}| \geq \frac{\gamma}{\mu_2(\kb,\ellb)^{\alpha}}
\end{equation}
unless  $(|k_1|,\cdots,|k_p|)$ are $(|\ell_1|,\cdots,|\ell_q|)$ equal up to a permutation where $\mu_2(\kb,\ellb)$ is the second largest index in the multi-index $(\kb,\ellb)$.

Despite this known problem, some results have been proved concerning the existence of quasi-periodic solutions, where the loss of derivative is controlled by the use of KAM-Newton schemes, see \cite{Bou95}. 
%
Concerning the control of large open sets of solutions, some results can be found in \cite{DS06} and \cite{Del09} but the time control depends on the shape on the nonlinearity inducing restriction on the index $r$ (essentially driven by the annulation degree of the nonlinearity in $0$).

Another situation where \eqref{mu2} appears in a natural way is given by numerical discretization of Hamiltonian PDEs. For example standard splitting methods applied to wave equations in dimension $d = 1$ induces numerical resonances destroying the property \eqref{mu3} and degenerating to \eqref{mu2} even for generic time discretization parameters\footnote{The fundamental reason is that time discrete numerical schemes require the control of small divisors of the form $e^{i h \Omega(\kb,\ellb)}- 1$ instead of $\Omega(\kb,\ellb)$ as defined in \eqref{mu3}, where $h$ is the time discretization parameter. Hence numerical resonances can occurs when for instance $h (\omega_{k_1} - \omega_{\ell_1})$ is close to an arbitrary large multiple of $2\pi$ (see for instance \cite{Sha00,HLW06} for a finite dimensional analysis of symplectic integrators). }. In this case, it is however possible to control the solution by playing with the time integrator or with the space discretization, see \cite{CHL08,FGP10a,FGP10b,Fao12,FGL14}. 

In this paper, we propose a new method to overcome this difficulty by a careful examination of the normal form induced by \eqref{mu2} and a control in mixed Sobolev norm inspired by some tools used in numerical analysis. In particular it can be seen as a nonlinear extension of \cite{DF07}, here in a continuous in time setting. 

As byproduct of this method, we prove the following. 
\begin{theorem}\label{thm-NLW} Let $d \geq 2$, $r\geq2$ and $s > d/2$. For almost any $m>0$  there exists $s_* = s_*(r,s)$ and $\varepsilon_0(r,s)>0$ such that 
for all $\varepsilon < \varepsilon_0$ the solution to \eqref{nlw} satisfies
\begin{equation}
\label{nlwd2}
\Norm{(u(0),\dot u(0))}{H^{s_*} \times H^{{s_*}-1}} \leq \varepsilon\,\quad \Longrightarrow \Norm{(u(t),\dot  u(t))}{H^s \times H^{s-1}} \leq 2 \varepsilon ,\quad t \leq \varepsilon^{-r}. 
\end{equation}
\end{theorem}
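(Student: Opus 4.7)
The plan is to combine a partial Birkhoff normal form up to order $r$---using only the first-order Melnikov bound \eqref{mu2}---with a polynomial-in-time growth estimate for a time-dependent linear Hamiltonian system on the high frequencies, in the spirit of what \cite{DF07} does in a space-discrete setting. First, I would verify that \eqref{mu2} holds for almost every $m>0$: fixing $r$, the map $m\mapsto \omega_{k_1}+\cdots+\omega_{k_p}-\omega_{\ell_1}-\cdots-\omega_{\ell_q}$ has a lower bound on its derivative in $m$ when the indices do not reduce trivially, and a standard measure argument then yields the claim for a suitable $\alpha=\alpha(r)$. I then recast \eqref{nlw} in Hamiltonian form through complex coordinates $\xi_j=\tfrac{1}{\sqrt 2}(\om_j^{1/2}u_j + i\,\om_j^{-1/2}\dot u_j)$, so that $H=\sum_j\om_j\va{\xi_j}^2+P(\xi,\bar\xi)$ with $P$ analytic and cubic at the origin.

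Next I fix a threshold $N$ (eventually a negative power of $\eps$) and split $\xi=\xi^L+\xi^H$ according to $\va j\leq N$ or $\va j>N$. Because \eqref{mu2} controls only the \emph{second} largest index, the Birkhoff procedure can eliminate only those monomials of degree $\leq r$ that contain \emph{at most one} high-frequency index: the small divisors arising in the homological equation are then bounded below by $\gamma/\mu_2^\alpha\geq \gamma N^{-\alpha}$, which costs only a polynomial factor in $\eps$. After this partial normal form the Hamiltonian reads
$$
\widetilde H = Z(\xi^L,\bar\xi^L) + K(\xi^L,\bar\xi^L;\xi^H,\bar\xi^H) + \Rc,
$$
where $Z$ is a resonant polynomial of degree $\leq r$ in the low modes only, $K$ is quadratic in the high modes with operator coefficients polynomial in $\xi^L$, and $\Rc$ gathers degree-$>r$ remainders and monomials involving at least three high indices (both of which are small in a suitable mixed Sobolev norm).

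Under $\widetilde H$, the low modes evolve essentially under $Z$, which is in Birkhoff normal form, so their actions---and hence the low-mode $H^s$ norm---are almost conserved for long times. The high modes satisfy a time-dependent \emph{linear} Hamiltonian equation $\dot\xi^H = iA(\xi^L(t),\bar\xi^L(t))\,\xi^H$ whose operator $A$ is controlled by the low modes. The key step is to show that such a flow exhibits at most polynomial Sobolev growth: differentiating $\Norm{\xi^H}{H^s}^2$ and exploiting the off-diagonal decay of $A$ inherited from the smoothness of $f$ and of $\xi^L(t)$, one should derive $\Norm{\xi^H(t)}{H^s}\leq (1+\va t)^\beta \Norm{\xi^H(0)}{H^s}$ for some $\beta=\beta(r,s)$. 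A bootstrap argument then absorbs $\Rc$ on times $\va t\leq \eps^{-r}$ provided $s_*$ is chosen large enough so that the regularity surplus $s_*-s$ dominates the loss $\eps^{-r\beta}$ from the growth estimate.

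The main obstacle is precisely this polynomial growth estimate for the high-mode linear flow: since \eqref{mu3} fails when $d\geq 2$, the operator $K$ cannot be conjugated away, so controlling $\Norm{\xi^H}{H^s}$ requires a fine structural analysis of $A$---transferring the smoothness of the nonlinearity and of the low-mode trajectory into tame, commutator-friendly estimates---together with a careful calibration of the cutoff $N$ as a negative power of $\eps$ against the loss $N^\alpha$ from \eqref{mu2} and the polynomial growth from the high-mode flow.
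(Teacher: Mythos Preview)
Your overall architecture matches the paper's: split at a threshold $N\sim\varepsilon^{-\kappa}$, do a partial Birkhoff normal form, control low modes by action preservation, and control high modes through a time-dependent linear system. But there is a genuine gap in how you treat the two-high-mode terms, and it undermines the polynomial growth estimate you need.

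You assert that under \eqref{mu2} one can eliminate only monomials with \emph{at most one} high index, so that $K$ collects \emph{all} monomials quadratic in $\xi^H$. In that case $K$ contains terms of the form $C_{ab}(\xi^L)\,\xi_a^H\xi_b^H$ and their conjugates, and the high-mode equation is not $\dot\xi^H=iA(t)\xi^H$ but a genuine $(\xi^H,\bar\xi^H)$-coupled Hamiltonian system. Such a system has no reason to preserve the $\ell^2$ norm of $\xi^H$, and ``off-diagonal decay of $A$'' alone does not prevent exponential growth of Sobolev norms for a generic time-dependent linear Hamiltonian flow. Your polynomial bound $(1+|t|)^\beta$ is therefore unjustified as stated.

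The paper closes this gap by observing that the two-high-mode terms with \emph{same} sign, i.e.\ with small divisor $\Omega_{\mathrm{low}}+\omega_{b_1}+\omega_{b_2}$, can also be eliminated: this is condition \eqref{H3}, which is easy since $\omega_{b_1}+\omega_{b_2}\gtrsim N$. What survives after normal form is only the \emph{symmetric} piece $S^{(ii)}=\sum_{|a|,|b|>N}B_{ab}(\xi^L)\,\xi_a\bar\xi_b$ with $B$ Hermitian. This Hermiticity gives near-conservation of $\|\xi^H\|_{\ell^2}$; the higher norm $\|\xi^H\|_{s_0}$ is then controlled by a commutator estimate (gaining one derivative thanks to the momentum restriction $|a-b|\lesssim N$) combined with interpolation between $\ell^2$ and $\ell^2_{s_0}$, which yields the polynomial-in-$t$ growth. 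Note also that the paper measures high modes in a \emph{weaker} norm $\ell^2_{s_0}$ with $s_0\ll s$, so that the initial smallness $\|\xi(0)\|_{2s}\leq\varepsilon$ makes $\|\xi^H(0)\|_{s_0}$ of order $\varepsilon^{2r+1}$; this is where the loss of derivative $s_*\gg s$ is actually spent, rather than in absorbing $(1+|t|)^\beta$ as you suggest.
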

In other words, \eqref{nlwd1} holds up to a finite loss of derivative in the initial condition\footnote{Historically, a similar result with loss of derivative was first given under the condition \eqref{mu3} in \cite{Bou96}. }. Concretely we can prove that $s_*(r,s)$ is of order less than $\mathcal{O}(r^5s^5)$ but this is certainly not optimal.

The previous property \eqref{nlwd2} is in  fact a corollary of a stronger abstract result proved in Theorem \ref{main}. The main idea 
is to decompose the dynamics into low and high frequencies according to some large threshold depending on $\varepsilon$,  and then to try to conjugate the system to a normal form whose dynamics can be described and controlled. 

When the \eqref{mu3} condition holds, this normal form approach allows to conjugate all the \eqref{nlw} flow to a flow preserving the $H^s$ norm  up to terms that are arbitrarily small. When only \eqref{mu2}  is satisfied, this cannot be done, and linear terms remains in the dynamics of high modes, coupled with the low modes. 
Fortunately, these terms can be put under a {\em symmetric} Hamiltonian quadratic form. Hence, despite the linear nature of their dynamics, the $L^2$ norm of the high modes can be proved to be preserved over long times in the normal form analysis. This crucial information  allows to initiate an effective decomposition between low and high modes. To prove the almost preservation of higher Sobolev norms of the high modes  we use   a sort of pseudo-differential argument (or commutator Lemma) that allows to gain one derivative (see \eqref{comut} and \eqref{Vp}).  Then for given indices $s$ and $s_0$  with $s \gg s_0 > d/2$, we can control the low modes in a Sobolev norm $H^s$ and we show a polynomial growth in time of a Sobolev norm $H^{s_0}$ of the high modes of order $\mathcal{O}(t^{s_0})$. By choosing a smoother initial condition such that $\Norm{(u(0),\dot u(0))}{H^{2s} \times H^{2s-1}} \leq \varepsilon$ we then obtain (see Theorem \ref{thm-NLW2})
\begin{equation}
\label{satie}
\Norm{u(t)_{\leq N_{\varepsilon}}}{H^s} \leq 2\varepsilon \quad \mbox{and} \quad \Norm{u(t)_{> N_{\varepsilon}}}{H^{s_0}} \leq \varepsilon^r, \quad t \leq \varepsilon^{-\frac{r}{s_0 + 1}},
\end{equation}
in a regime where $s - s_0$ is large with respect to $r$. Here $u(t)_{\leq N_{\varepsilon}}$ and $u(t)_{> N_{\varepsilon}}$ denote the low and high modes parts, according to the threshold $N_{\varepsilon} = \varepsilon^{-\frac{r}{s - s_0}}$. 
When $s$ is large, we obtain \eqref{nlwd2} after a change of indices, but Theorem \ref{thm-NLW2} (see also Theorem \ref{main}) gives more precise informations. It shows that the dynamics of the low modes preserves the super-actions\footnote{Already used in \cite{BG06} for \eqref{nlw} in dimension $d = 1$ in a periodic setting.}, {\em i.e.} the quantities $J_n(t) = \sum_{|k| = n} |u_k(t)|^2$, $n \leq N_\varepsilon$ over very long times, where $u_k(t)$, $k \in \Z^d$ denote the Fourier coefficients of $u(t)$. Such a result does not hold for high modes since the interaction between  two close large modes  cannot be eliminated but only controlled.  Theorem \ref{main} expresses the fact that the condition \eqref{mu2} -much more general than \eqref{mu3}- is enough to ensure a decoupling of the dynamics of  low and high modes for very long times. 

In the previous estimate, $s_0$ has only to satisfy the condition\footnote{Condition that could probably be refined in the critical case using Galigardo-Nirenberg inequality.} $s_0 > d/2$. It also typically corresponds to what is numerically observed for initial data taken as trigonometric polynomials\footnote{Note that in the present result, all the constants depends on the Sobolev indices $s$ and $s_0$. An optimization of these constant could be done in an analytic context, following the technics used in \cite{FG13}.} for which the dynamics does not exhibit energy exchanges over long times. Such two-stage norms with different Sobolev scaling were previously used in the context of numerical analysis of splitting methods for Schr\"odinger equations in the linear case, see \cite{DF07} and \cite{DF09} where again the preservation of the $L^2$ norm of the high-modes was crucial to obtain a global control of the dynamics.  


We also believe that results of the form \eqref{nlwd2} or \eqref{satie} involving mixed Sobolev norms provides a natural setting for numerical discretization, for which \eqref{mu2} is the generic control of non-resonance condition. It might for instance allow to weaken the usual CFL conditions required, or to derive low-order integrators  following the analysis of \cite{HS17,OS18}.

\subsection*{Acknowledgement} 
During the preparation of this work the three authors benefited from the support of the Centre Henri Lebesgue ANR-11-LABX- 0020-01 and B.G. was supported 
 by ANR -15-CE40-0001-02  ``BEKAM''  and ANR-16-CE40-0013 ``ISDEEC'' of the Agence Nationale de la Recherche. 

\section{Abstract statement}

\subsection{Hamiltonian formalism}

We recall in this subsection the formalism used in \cite{Gre07,FG13,BFG} to deal with infinite dimensional Hamiltonians and flows depending on a infinite set of symplectic variables $(q,p)  = (q_a,p_a)_{a \in \Z^d} \in \R^{\Z^d} \times \R^{\Z^d}$ equipped with the usual $\ell_s^2(\Z^d,\R^2)$ norm defined\footnote{with the usual notation $|a|^2 = a_1^2 + \cdots + a_d^2$ for $a = (a_1,\ldots,a_d) \in \Z^d$.} as 
$$
\Norm{(q,p)}{s}^2 = \sum_{a \in \Z^d} \langle a \rangle^{2s} (p_a^2 + q_a^2), \quad \langle a \rangle^2 = 1 + |a|^2. 
$$
As explained in \cite{Gre07}, for $\mathcal{U}$ an open set of $\ell_s^2$, for a function $H(q,p)$ such that $H \in \mathcal{C}^\infty(\mathcal{U},\R)$, with $\ell^2$ gradient  $\nabla_{(q,p)} H \in  \mathcal{C}^\infty(\mathcal{U},\ell_s^2)$, we can define the flow of a Hamiltonian system 
\begin{equation}
\label{Hamqp}
\forall\, a \in \Z^d, \quad \dot q_a = \frac{\partial H}{\partial p_a} (q,p), \quad \dot p_a = -  \frac{\partial H}{\partial q_a} (q,p). 
\end{equation}
To easily deal with normal form transformations, it is convenient to 
use the complex representation $(\xi_{a})_{a\in \Z^d} = (\frac{1}{\sqrt{2}}(q_a + i p_a))_{a\in \Z^d}$ in $\C^{\Z^d}$ equipped with the $\ell_s^2(\Z^d,\C)$ norm. 
Then with the notations
$$
\frac{\partial}{\partial{\xi_a}} = \frac{1}{\sqrt{2}} \left( \frac{\partial}{\partial{q_a}}   - i\frac{\partial}{\partial{p_a}}\right)
\quad\mbox{and}\quad 
\frac{\partial}{\partial{\bar \xi_a}} = \frac{1}{\sqrt{2}} \left( \frac{\partial}{\partial{q_a}}  + i\frac{\partial}{\partial{p_a}}\right), 
$$
the real Hamiltonian system is equivalent to the complex system, 
\begin{equation}\label{Eham2}
\forall\, a \in \Z^d,\quad
\displaystyle \dot \xi_a = - i\frac{\partial H}{\partial \bar \xi_a}(\xi,\bar \xi) =: (X_H(\xi,\bar \xi))_a 
\end{equation}
where $H(q,p) = H(\xi,\bar \xi) \in \R$ is a called a {\em real} Hamiltonian. The notation $X_H(\xi,\bar \xi) = (X_H(\xi,\bar \xi))_{a\in \Z^d}$ thus denote the Hamiltonian vector field associated with the Hamiltonian $H$. 
If we associate with $(\xi,\bar \xi)$ a complex function $\psi$ on  $\T^d$, through the formula 
\begin{equation}
\label{Exieta}
\psi (x)= \sum_{a\in \Z^d} \xi_{a} 
e^{i a x}, 
\end{equation}
then the Sobolev norm $\Norm{\psi}{H^s}$ is equivalent to the norm 
$$
\Norm{\xi}{s}^2 =  \sum_{a \in \mathbb{Z}^d} \langle a \rangle^{2s} |\xi_a|^2.
$$
The symplectic structure is given by 
\begin{equation}\label{Esymp}
\sum_{a \in \Z^d}{\dd q_a} \wedge \dd p_a =-i\sum_{a\in\Z^d}\dd \xi_a\wedge \dd \bar \xi_a.
\end{equation}
and the Poisson bracket in complex notation reads 
\begin{equation}\label{poisson}
\{F,G\} = - i \sum_{a \in \Z^d} \frac{\partial F}{\partial \xi_a}\frac{\partial G}{\partial \bar\xi_a} -   \frac{\partial F}{\partial \bar \xi_a}\frac{\partial G}{\partial \xi_a}.  
\end{equation}

\begin{definition}\label{def:2.1}
 For a given $s> d/2$ and a domain $\Uc$ containing $0$ in $\ell_s^2 := \ell_s^2(\Z^d,\C)$, we  denote by $\Hc_s(\Uc)$ the space of real Hamiltonians $P(\xi,\bar \xi) \in \R$ satisfying 
$$
P \in \mathcal{C}^{\infty}(\Uc,\R), \quad \mbox{and}\quad 
X_P \in \mathcal{C}^{\infty}(\Uc,\ell^2_s), 
$$
where $X_P$ is defined in \eqref{Eham2}. 
We will use the shortcut  $F\in \Hc_s$ to indicate that there exists a domain $\Uc$ containing $0$ in $\ell_s^2$ such that $F\in\Hc_s(\Uc)$.
\end{definition}
 Notice that for $F$ and $G$ in $\Hc_s$ the formula \eqref{poisson} is well defined in a neighborhood of $0$. 
With a given Hamiltonian function $H \in \Hc_s$, we associate the Hamiltonian system \eqref{Eham2}, and we can naturally defines its flow. 

\begin{proposition}\label{prop3.3} Let $s > d/2$. Any Hamiltonian in $\Hc_s$ defines a local flow in $\ell^2_s$ which is a symplectic transformation. 
\end{proposition}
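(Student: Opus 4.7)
The plan is to split the statement into the two standard ingredients: local existence/uniqueness of the flow in the Banach space $\ell_s^2$, and preservation of the symplectic 2-form \eqref{Esymp} along that flow.

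First, I would invoke the Cauchy--Lipschitz (Picard--Lindel\"of) theorem in the Banach space $\ell_s^2$. Since $H \in \Hc_s(\mathcal{U})$ gives $X_H \in \mathcal{C}^{\infty}(\mathcal{U}, \ell_s^2)$, the vector field $X_H$ is in particular locally Lipschitz on a neighborhood of $0$. This yields, for every initial datum $\xi_0$ sufficiently close to $0$, a unique maximal $\mathcal{C}^1$ solution $t\mapsto \phi^t(\xi_0)$ of \eqref{Eham2} with values in $\ell_s^2$, defined on an open interval around $0$. Standard smooth-dependence results for Banach-space ODEs with a $\mathcal{C}^\infty$ right-hand side then give that $(t,\xi_0)\mapsto \phi^t(\xi_0)$ is $\mathcal{C}^\infty$ where defined, and that the Fr\'echet differential satisfies the variational equation
\begin{equation*}
\partial_t D\phi^t(\xi_0) \;=\; DX_H(\phi^t(\xi_0))\circ D\phi^t(\xi_0), \qquad D\phi^0(\xi_0)=\mathrm{Id}_{\ell_s^2}.
\end{equation*}

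Next I would verify symplecticity by differentiating $\omega$ along the flow. The bilinear form $\omega$ induced by \eqref{Esymp} is continuous on $\ell^2\times\ell^2$ (by Cauchy--Schwarz), so in particular on $\ell_s^2\times\ell_s^2$. For fixed tangent vectors $u,v\in\ell_s^2$, the scalar $t\mapsto \omega(D\phi^t u, D\phi^t v)$ is then smooth, and the variational equation gives
\begin{equation*}
\frac{d}{dt}\omega(D\phi^t u, D\phi^t v) \;=\; \omega\bigl(A_t\, D\phi^t u,\, D\phi^t v\bigr)+\omega\bigl(D\phi^t u,\, A_t\, D\phi^t v\bigr),
\end{equation*}
with $A_t := DX_H(\phi^t(\xi_0))$. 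It thus suffices to prove that at every $\xi\in\mathcal{U}$ the bounded operator $DX_H(\xi)$ is $\omega$-antisymmetric. Starting from $X_H(\xi,\bar\xi)_a = -i\,\partial_{\bar\xi_a}H$ and using that $H$ is a smooth real-valued function, Schwarz's theorem on equality of mixed partials gives the pointwise relation $\omega(DX_H(\xi)u,v)+\omega(u,DX_H(\xi)v)=0$. Consequently $\omega(D\phi^t u, D\phi^t v)$ is constant in $t$, equal to $\omega(u,v)$ at $t=0$, which is exactly the condition that $\phi^t$ be a symplectic transformation.

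The only delicate point is bookkeeping in the complex coordinates $(\xi,\bar\xi)$: one has to organize the four families of second partials $\partial_{\xi_b}\partial_{\xi_a}H$, $\partial_{\bar\xi_b}\partial_{\xi_a}H$ and their conjugates, and check that the infinite sums occurring in $\omega(DX_H(\xi)u,v)$ converge absolutely (which is automatic since $DX_H(\xi)\in\mathcal{L}(\ell_s^2)$ and $u,v\in\ell_s^2$) and cancel pairwise by Schwarz's symmetry. No smallness, non-resonance, or spectral information on the linear part of $H$ is used, so the proposition is really just a clean transcription of the finite-dimensional argument into the Banach-space setting made available by Definition \ref{def:2.1}.
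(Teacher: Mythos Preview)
Your argument is correct and is the standard route: Cauchy--Lipschitz in the Banach space $\ell_s^2$ for local existence and smooth dependence, then the variational equation together with the $\omega$-antisymmetry of $DX_H$ (Schwarz's theorem on mixed partials) to show $(\phi^t)^*\omega=\omega$.

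There is nothing to compare against: the paper states this proposition without proof, treating it as a standard fact inherited from the formalism of \cite{Gre07,FG13,BFG}. Your write-up is exactly the kind of verification one would supply if asked to justify it, and the only care needed is what you already flag, namely that the infinite sums defining $\omega(DX_H(\xi)u,v)$ converge absolutely because $DX_H(\xi)\in\mathcal{L}(\ell_s^2)$ and $\omega$ is continuous on $\ell_s^2\times\ell_s^2$ (indeed on $\ell^2\times\ell^2$).
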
 

\subsection{Polynomial Hamiltonians}

To algebraically deal with polynomials depending on $(\xi,\bar \xi)$, we identify $\C^{\Z^d} \times \C^{\Z^d} \simeq 
\C^{\U_2 \times \Z^d}$ where $\U_2=\{\pm 1\}$ and use the convenient notation $(\xi,\bar \xi)= z = (z_j)_{j \in \U_2 \times \Z^d} \in \C^{\U_2 \times \Z^d}$ where
\begin{equation}
\label{Ezj}
j = (\delta,a) \in \U_2 \times \Z^d  \Longrightarrow 
\left\{
\begin{array}{rcll}
z_{j} &=& \xi_{a}& \mbox{if}\quad \delta = 1,\\[1ex]
z_j &=& \bar \xi_a & \mbox{if}\quad \delta = - 1.
\end{array}
\right.
\end{equation} 
We define the $\ell_s^2$ norm of an element $z = (\xi,\bar \xi)$ to be 
$$
\Norm{z}{s}^2 := \sum_{j \in \U_2 \times\mathbb{Z}^d} \langle j \rangle^{2s} |z_j|^2 =  2 \sum_{a \in \mathbb{Z}^d} \langle a \rangle^{2s} |\xi_a|^2  = 2\Norm{\xi}{s}^2. 
$$
 where for $j=(\delta,a) \in \U_2 \times \Z^d$ we set $ \langle j \rangle^2=\langle a \rangle^2$. 
With this notation, the Hamiltonian system \eqref{Eham2} can be written  
$$
\dot z = X_H(z), \quad \mbox{where}\quad (X_H(z))_{(\delta,a)} := - i \delta (X_H(\xi,\bar \xi))_a, \quad z = (\xi,\bar \xi)\, . 
$$
Here $X_H(\xi,\bar \xi)$ denote the vector field in \eqref{Eham2}. Another way to formulate this notation is to say that with the identification \eqref{Ezj}, the vector field $X_H(\xi,\bar \xi)$ is naturally extended as $(X_H(z))_j = - i \delta \frac{\partial H}{\partial z_{\overline j}}(z)$, for $j = (\delta,a) \in \U_2 \times \Z^d$, where $\overline j = (- \delta,a)$. 

For $\kb=  (k_1,\ldots,k_m) = (\delta_i,a_i)_{i=1}^m\in  (\U_2 \times\mathbb{Z}^d)^m$ we denote the momentum 
$$\M(\kb)=\sum_{i=1}^m \delta_ia_i, $$
and we define the conjugate $\overline{\kb} = (- \delta_i,a_i)_{i=1}^m\in  (\U_2 \times\mathbb{Z}^d)^m$. We set  
\begin{equation}
\label{eqMm}
\M_m = \{\kb \in  (\U_2 \times\mathbb{Z}^d)^m \, \mid \, \M(\kb) = 0\}
\end{equation}
the set of zero-momentum multi-indices.\\
For a given $\kb \in \M_m$ we write 
$$
z_\kb = z_{k_1} \cdots z_{k_m}. 
$$
We also define 
\begin{multline}
\label{eqRm}
\Rc_m = \{  (\delta_j,a_j)_{j=1}^m \in \M_m \ | \ \exists \sigma \in \mathfrak{S}_m,\forall j=1,\dots, m, \ \delta_j =-\delta_{\sigma_j} \\ \mathrm{and} \ \langle a_j \rangle = \langle a_{\sigma_j} \rangle    \}
\end{multline}
the set of {\bf resonant} multi-indices. Note that by construction if $m$ is odd then $\Rc_m$ is empty and that if $\kb=(\delta_j,a_j)_{j=1}^m \in \Rc_m$ is associated with a permutation $\sigma$ then we have 
\begin{equation}
\label{dansleZ}
z_{\kb} = \prod_{\substack{1\leq j\leq m\\ \delta_j = 1}} \xi_{a_j} \bar\xi_{a_{\sigma_j}}. 
\end{equation}
%
%

\begin{definition}\label{polydef}
We say that $P(z)$
 is a homogeneous polynomial of order $m$ if it can be written  
 \begin{equation}
 \label{poly}
 P(z)=P{[c]}(z) = \sum_{\jb \in \Mc_m} c_\jb z_\jb, \quad\mbox{with}\quad c=(c_\jb)_{\jb\in\Mc_m} \in \ell^\infty(\Mc_m),  
 \end{equation}
 and such that the coefficients $c_\jb$ satisfy $c_{\overline \jb} = \overline{c_{\jb}}$. 
 \end{definition}
 Note that the last condition ensures that $P$ is real valued, as the set of indices are invariant by the application $\jb \mapsto \overline \jb$.  
 Following \cite{FG13,BFG} but in a $\ell_s^2$ framework, we get the following Proposition. It turns out to be a consequence of the more general Lemma \ref{protusion} proved below. 
 \begin{proposition}
\label{polyflow}
Let  $s > d/2$.
 \begin{itemize}
 \item[(i)]A homogeneous polynomial, $P{[c]}$, of degree $m\geq2$ belongs to $ \Hc_s(\ell^2_s)$ and we have
\begin{equation}
\label{Echamp}
  \Norm{X_{P{[c]}}(z) }{s} \leq (C_{s})^m \Norm{ c}{\ell^\infty}\Norm{z}{s}^{m-1}, \quad z = (\xi,\bar \xi),
\end{equation}
for some constant $C_{s}$ depending only on $s$. \\
\item[(ii)] For two homogeneous polynomials, $P{[c]}$ and $P{[c']}$, of degree respectively $m$ and $n$, the Poisson bracket  is a homogeneous polynomial of degree $m+n-2$, $\{P{[c]},P{[c']}\}=P{[c'']} $  and we have the estimate
\begin{equation}
\Norm{ c''}{\ell^\infty} \leq 2 m n \Norm{ c}{\ell^\infty} \Norm{ c'}{\ell^\infty}
\label{Ebrack}
.\end{equation}
\end{itemize}
\end{proposition}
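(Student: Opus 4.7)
The plan is to prove both statements by direct manipulation of the Fourier expansion \eqref{poly}, using the zero-momentum constraint \eqref{eqMm} as the crucial combinatorial input. Part (i) reduces to a standard multilinear estimate in $\ell_s^2$, while part (ii) is essentially a bookkeeping exercise on the Poisson bracket formula \eqref{poisson}.

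For (i), I would first write out the $j$-th component of $X_{P[c]}$ explicitly. After symmetrizing $c$ (which does not increase $\Norm{c}{\ell^\infty}$), one obtains, for $j = (\delta,a) \in \U_2 \times \Z^d$,
\begin{equation*}
(X_{P[c]}(z))_j = -i\delta\, m \sum_{\substack{(k_2,\ldots,k_m) \in (\U_2 \times \Z^d)^{m-1} \\ (\bar j, k_2,\ldots,k_m) \in \M_m}} c_{(\bar j, k_2,\ldots,k_m)}\, z_{k_2} \cdots z_{k_m}.
\end{equation*}
The zero-momentum condition forces $a = \delta \sum_{i=2}^m \delta_{k_i} a_{k_i}$, so $\langle a \rangle \leq m \max_i \langle a_{k_i}\rangle$ and hence $\langle a \rangle^s \leq m^s \sum_{i=2}^m \langle a_{k_i}\rangle^s$. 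Distributing this weight and using Cauchy--Schwarz reduces the $\ell_s^2$ bound on $X_{P[c]}$ to an $(m-2)$-fold convolution estimate on $\Z^d$ with the weight $\langle \cdot \rangle^s$ carried by one designated factor. Since $s > d/2$, the space $\ell_s^2(\Z^d)$ is an algebra under convolution (equivalently, $H^s(\T^d)$ is a Banach algebra), and iterating this algebra inequality produces \eqref{Echamp}. Smoothness of $X_{P[c]}$ on $\ell_s^2$ then follows from multilinearity.

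For (ii), a direct computation starting from \eqref{poisson} yields
\begin{equation*}
\{P[c], P[c']\} = -i \sum_{a \in \Z^d} \Bigl( \partial_{\xi_a} P[c]\, \partial_{\bar\xi_a} P[c'] - \partial_{\bar\xi_a} P[c]\, \partial_{\xi_a} P[c'] \Bigr).
\end{equation*}
Expanding the derivatives in the polynomial basis produces, for each of the two terms, a sum over pairs of ``internal'' indices of opposite $\delta$ signs. The inner summation over $a$ then collapses: the zero-momentum constraints inherited from $P[c]$ and $P[c']$ single out a unique value of $a$ for each combination of external indices, and the remaining multi-index automatically lies in $\M_{m+n-2}$. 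This identifies $\{P[c],P[c']\}$ as a homogeneous polynomial $P[c'']$ of degree $m+n-2$. The bound $\Norm{c''}{\ell^\infty} \leq 2mn \Norm{c}{\ell^\infty} \Norm{c'}{\ell^\infty}$ follows from counting: $m$ ways to pick the derivative index in $P[c]$, $n$ ways in $P[c']$, and a factor $2$ from the two terms above. The reality condition $c''_{\bar \jb} = \overline{c''_\jb}$ is inherited from the corresponding properties of $c$ and $c'$.

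The main technical point is the multilinear estimate in (i). The zero-momentum constraint is precisely what allows all $m-1$ factors of $\Norm{z}{s}$ to appear on the right-hand side with a constant of the form $(C_s)^m$; without it the Sobolev weight could not be absorbed into a single factor. The hypothesis $s > d/2$ is essential for the iterated algebra inequality. Once (i) is in place, (ii) is purely algebraic and requires no further analytic input.
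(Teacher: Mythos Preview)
Your proposal is correct and follows essentially the same route as the paper: the paper does not prove the proposition directly but defers to Lemma~\ref{protusion}, whose $n=0$ case (all modes treated as low) is exactly estimate \eqref{Echamp}, and whose proof---pass to $f(x)=\sum |z_\ell| e^{i\delta a\cdot x}$ and use the $H^s$ algebra property---is the physical-space version of your convolution argument. The only cosmetic difference is that the paper distributes the excess Sobolev weight multiplicatively across all factors rather than additively onto a single designated factor; for part (ii) the paper gives no separate argument and your bookkeeping is the standard one.
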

We end this section with a result concerning Lie transformations. We recall that to a Hamiltonian function $F$ we associate, if it exists, the Lie transformation $\Phi_{F}^1$ which is the time one flow generated by $F$. This transformation is automatically symplectic.
\begin{lemma}\label{exist} Assume that $s> d/2$ and  
let $P= P{[c_3]}+\dots+P{[c_r]}$, $c_m\in \Mc_m$, be a polynomial of order at least $3$ at the origin, then for all  $\nu\leq  \kappa_{r,s}\big(\max( \Norm{c_3}{\ell^\infty},\dots, \Norm{c_r}{\ell^\infty})\big)^{-1}$, the Lie transformation $\tau=\Phi_{P}^1$ is well defined  on a  ball  $B_s(0,\nu)$ of $\ell^2_s$ with values in $B_s(0,2\nu)$ and we have
\begin{equation}
\label{antalgiques}
 \Norm{\tau(z)-z}{s}\leq   C_{r,s} \max_m \Norm{ c_m}{\ell^\infty} \nu^{2}, \quad \forall\, z\in B_s(0,\nu),
\end{equation}
where $C_{r,s}$ and $\kappa_{r,s}$ are some positive constants depending only on $r$ and $s$.
\end{lemma}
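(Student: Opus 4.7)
This is a standard continuation argument for the autonomous ODE $\dot z = X_P(z)$ in the Banach space $\ell^2_s$, driven by the vector-field bound of Proposition \ref{polyflow}. Applied to each homogeneous component $P[c_m]$ ($3 \leq m \leq r$), Proposition \ref{polyflow}(i) yields $X_P \in \mathcal{C}^\infty(\ell^2_s, \ell^2_s)$ (hence locally Lipschitz) together with the estimate
\begin{equation*}
\|X_P(z)\|_s \;\leq\; \sum_{m=3}^r (C_s)^m \|c_m\|_{\ell^\infty} \|z\|_s^{m-1} \;\leq\; M\,\|z\|_s^2 \sum_{m=3}^r (C_s)^m \|z\|_s^{m-3},
\end{equation*}
where $M := \max_m \|c_m\|_{\ell^\infty}$. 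Cauchy--Lipschitz in Banach spaces then provides a unique local-in-time solution of $\dot z = X_P(z)$, $z(0) = z_0$ for every $z_0 \in \ell^2_s$.

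Fix $z_0$ with $\|z_0\|_s \leq \nu$ and let $T^\star \in (0, 1]$ be the supremum of $T$ for which the solution exists on $[0, T]$ and remains in $\bar B_s(0, 2\nu)$. For $t < T^\star$, integrating the bound above with $\|z\|_s \leq 2\nu$ yields
\begin{equation*}
\|z(t) - z_0\|_s \;\leq\; \int_0^t \|X_P(z(\sigma))\|_s \, d\sigma \;\leq\; t \cdot M\, \nu^2 \sum_{m=3}^r (C_s)^m\, 2^{m-1}\, \nu^{m-3}.
\end{equation*}
I choose $\kappa_{r,s} > 0$ small (depending only on $r, s$) so that the smallness condition $\nu \leq \kappa_{r,s}/M$ forces the factor $M\nu \sum_m (C_s)^m 2^{m-1} \nu^{m-3}$ to be at most $1/2$: concretely the dominant $m = 3$ term contributes $4 (C_s)^3 M \nu \leq 4 (C_s)^3 \kappa_{r,s}$, and higher-degree terms carry additional powers $\nu^{m-3}$ that are tamed by a further shrinking of $\kappa_{r,s}$. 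Then $\|z(t) - z_0\|_s \leq \nu/2$ on $[0, T^\star)$, hence $\|z(t)\|_s \leq 3\nu/2 < 2\nu$ strictly, and a standard continuity/maximality argument forces $T^\star = 1$.

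This shows that the time-one Lie transform $\tau = \Phi_P^1$ is well defined on $B_s(0, \nu)$ with values in $B_s(0, 2\nu)$, and evaluating the integral bound at $t = 1$ yields \eqref{antalgiques} with $C_{r,s} := \sum_{m=3}^r (C_s)^m 2^{m-1} \kappa_{r,s}^{m-3}$ (or any comparable constant depending only on $r$ and $s$). The only subtle point I expect is the bookkeeping of constants required so that a \emph{single} $\kappa_{r,s}$ simultaneously guarantees non-escape from the $2\nu$-ball and the $O(M\nu^2)$-size estimate for $\tau(z) - z$; this works because the leading contribution to $X_P$ near $0$ is quadratic (coming from the cubic part $P[c_3]$), while the higher-degree corrections produce extra factors of $M\nu \leq \kappa_{r,s}$ that are automatically small under the smallness hypothesis.
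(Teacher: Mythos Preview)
Your argument is correct and follows essentially the same route as the paper: local existence from Cauchy--Lipschitz together with Proposition~\ref{polyflow}(i), the integral bound $\|\Phi_P^t(z)-z\|_s\le \int_0^t\|X_P(z(\sigma))\|_s\,d\sigma\le C_{r,s}\,t\,M\,\nu^2$ on the bootstrap region $B_s(0,2\nu)$, and a continuation argument to reach $t=1$. The only difference is that you spell out the choice of $\kappa_{r,s}$ and the constant $C_{r,s}$ a bit more explicitly than the paper does.
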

\proof  In view of Proposition \ref{polyflow} assertion (i) we deduce from  the Cauchy-Lipschitz Theorem that the flow $\Phi_{P}^t$ is locally well defined on $\ell^2_s$. Furthermore we have for $z\in B_s(0,\nu)$ and as long as  $z(t)=\Phi_{P}^t(z)\in B(0,2\nu)$
\begin{multline*}
\Norm{\Phi_P^t(z)-z}{s}\leq \left| \int_0^t  \Norm{X_P(z(w))}{s}dw \right| \leq \sum_{m=3}^r C_{m,s} \Norm{ c_m}{\ell^\infty}|t|  (2\nu)^{m-1}\\ \leq C_{r,s} |t| \max_m \Norm{ c_m}{\ell^\infty}  \nu^{2}.
\end{multline*}
Thus we conclude by a bootstrap argument that, taking $\nu$ small enough, the flow is defined for all $z\in B(0,\nu)$ up to time $t = 1$ and that $z(t)$ remains in $B(0,2\nu)$ for $t\leq 1$ and satisfies \eqref{antalgiques}. 
\endproof

 \subsection{Non resonance condition}
\begin{definition}\label{defNR} A family of frequencies $\om=\{\om_a, a \in \mathbb{Z}^d\}$ is {\bf non resonant},  if there exist  $(\alpha(r))_{r\geq 1}\in (\mathbb{R}_+^*)^{\mathbb{N}}$ and $(\gamma_r)_{r\geq 1}\in (\mathbb{R}_+^*)^{\mathbb{N}}$, such that for all $r\geq 1$, all $N \geq 1$  and all $\kb=(\delta_i,a_i)_{i=1}^r\in  (\U_2 \times\mathbb{Z}^d)^r $ 
satisfying $\langle a_i \rangle \leq N,$ for $i=1,\cdots,r$, we have
\begin{align}
\label{H1}\tag{H1} |\delta_1\om_{a_1}+\cdots+\delta_r\om_{a_r}&|\geq \gamma_r N^{-\alpha(r)}, \quad \text{when }\kb\notin \Rc_r,\\
\label{H2}\tag{H2} |\delta_1\om_{a_1}+\cdots+\delta_r\om_{a_r}&+\om_{b}|\geq \gamma_{r+1} N^{-\alpha(r+1)},\; \forall \langle b\rangle >  N\; \text{with}\;\M(\kb)+b=0,\\
\label{H3}\tag{H3} |\delta_1\om_{a_1}+\cdots+\delta_r\om_{a_r}&+\om_{b_1}+\om_{b_2}|\geq \gamma_{r+2} N^{-\alpha(r+2)},\quad \forall \langle b_1\rangle,\langle b_2\rangle > N \\  \nonumber&\quad\quad\text{with }\quad \M(\kb)+b_1+b_2=0.
\end{align}
\end{definition}
We notice that  conditions \eqref{H1}-\eqref{H2} are equivalent to condition \eqref{mu2} introduced in the introduction while conditions \eqref{H1}-\eqref{H2}-\eqref{H3} are not equivalent to \eqref{mu3} since in \eqref{H3} we are not considering the case where the two high frequencies have opposite sign.
\begin{remark}
Note that in \eqref{H2} using the zero momentum condition (see \eqref{eqMm}), $\langle b \rangle$ is in fact bounded by $r N$. Hence \eqref{H2} is a trivial consequence of \eqref{H1}. Similarly, as in many applications $\omega_a \sim |a|^\nu$ when $a \to \infty$, for some $\nu>0$, \eqref{H3} is also a consequence of \eqref{H1} as we can restrict \eqref{H3} to a set of $(b_1,b_2)$ that are  bounded by $C(r) N$. 
\end{remark}

\subsection{Statement}

Let us start with the following notation: For $\xi \in \ell_s^2$ and a given number $N$, we decompose $\xi = \xi_{\leq N} + \xi_{>N}$ where for all $j \in \mathbb{U}_2 \times \Z^d$, 
$$
(\xi_{\leq N})_{j} = 
\left\{\begin{array}{ll}
\xi_j & \mbox{for} \quad \langle j \rangle \leq N,\\[1ex]
0 & \mbox{for} \quad \langle j \rangle > N,
\end{array}
\right.
\quad\mbox{and}\quad
\xi_{>N} = \xi - \xi_{\leq N}. 
$$ 
Given a function $\psi\in H^s(\T^d)$ with Fourier coefficients $\xi_a$, $a \in \Z^d$, and a number $N\geq 1$ the previous decomposition induces naturally the decomposition $\psi=\psi_{\leq N}+\psi_{>N}$ with
$$\psi_{\leq N}(x)=\sum_{ \langle a \rangle\leq N} \xi_a e^{i a\cdot x} \quad \text{and }\quad \psi_{>N}(x)=\sum_{ \langle a \rangle > N} \xi_a e^{i a\cdot x}.$$
Similarly, we note $z_{\geq N}$ and $z_{>N}$ the decomposition induced by the notation \eqref{Ezj}. 

We now make our hypothesis on the Hamiltonian $H$ that we will consider. 
\begin{hypothesis}
\label{beau}
The Hamiltonian $H$ can be written 
\begin{equation}
\label{H}
H = H_2 + P = \sum_{ a  \in \mathbb{Z}^d}\om_a |\xi_a |^2 + P
\end{equation}
where $ \omega = (\omega_a)_{a \in \Z^d}$ is a non resonant family of real numbers in the sense of Definition \ref{defNR}, $P$ belongs to $ \Hc_s$  for some $s>d/2$ and $P$ is of order at least $3$ at the origin (which means that $P$ and its differentials up to the order $2$ vanish at $0$).
\end{hypothesis}

Note that in general the frequencies $\omega_a$ will not be uniformly bounded with respect to $a$, and hence the quadratic part $\sum_{a\in \Z}\omega_a |\xi_a|^2$, does not belong to $\Hc_s$. Nevertheless it generates a continuous flow which maps $\ell^2_s$ into $\ell^2_s$ explicitly given for all time $t$ and for all indices $a$ by  $\xi_a(t)=e^{-i\omega_a t}\xi_a(0)$. Furthermore this flow has the group property.  By standard arguments (see for instance \cite{Caz03} and \cite{BFG} in a similar framework), this is enough to define the local symplectic flow,  $\dot z =X_H(z)$, in $\ell_s^2$.

\begin{theorem}\label{main} Let $H$ be a Hamiltonian satisfying Hypothesis \ref{beau}.
 Then for all $r\geq 2$ and all $s > s_0>d/2$  satisfying 
\begin{equation}
\label{conds} 
s - s_0 \geq  s_*(r) := 6r^2 \alpha(3r) + 2dr
\end{equation}
 there exists $\eps_0(r,s,s_0,\omega)>0$ such that for all $\eps<\eps_0(r,s,s_0,\omega)$ the solution $\xi(t)$, generated by the flow of $H$ issued from an initial datum $\xi(0) \in \ell_{2s}^2$ satisfying $\Norm{\xi(0)}{2s} \leq \eps$, exists for all time $t\leq \eps^{-\frac r{s_0+1}}$ and satisfies
\begin{equation}
\label{great}
\begin{array}{l}
\forall\, \langle a \rangle\leq N_\varepsilon :=\eps^{-\frac r{s -s_0}},\\[2ex]
\forall\, t\leq \eps^{-\frac{r}{s_0+1}}, 
\end{array}\qquad
\left|\sum_{\substack{ b \in  \mathbb{Z}^d \\   \langle b \rangle=\langle a \rangle}}|\xi_b(t)|^2-\sum_{\substack{b\in \mathbb{Z}^d    \\\langle b \rangle=\langle a \rangle}}|\xi_b(0)|^2\right|\leq \eps^3 \langle a \rangle^{-2s}  
\end{equation}
and 
$$ \left\{ \begin{array}{lll}\Norm{\xi(t)_{\leq N_{\varepsilon}}}{s}^2 = \displaystyle \sum_{ \langle a \rangle\leq N_\varepsilon }\langle a \rangle^{2s} |\xi_a(t)|^2\leq 4\eps^2\\[2ex] \Norm{\xi(t)_{> N_\varepsilon}}{s_0}^2 = \displaystyle 
\sum_{ \langle a \rangle> N_\varepsilon } \langle a \rangle^{2s_0} |\xi_a(t)|^2\leq \eps^{2r} \end{array} \right. \quad\text{for}\quad t\leq \eps^{-\frac r{s_0+1}}.$$
\end{theorem}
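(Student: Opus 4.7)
\proof[Proof strategy]

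The plan is to conjugate $H$ to a normal form in which the effective interaction between low modes ($\langle a\rangle \le N_\varepsilon$) and high modes ($\langle a\rangle > N_\varepsilon$) is as weak as allowed by \eqref{H1}--\eqref{H3}, and then to bootstrap on two quantities tailored to these two regimes. I would begin by expanding $P = \sum_{m=3}^{\infty} P_m$ into homogeneous components and, following the Birkhoff scheme of Proposition~\ref{polyflow} and Lemma~\ref{exist}, build successive Lie transforms $\tau_m = \Phi_{\chi_m}^1$ for $m = 3, \dots, r+2$. At each step I solve a cohomological equation $\{H_2,\chi_m\} + P_m^{\mathrm{new}} = Z_m + \text{(high-order remainder)}$ monomial by monomial. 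A monomial $c_\kb z_\kb$ with $\kb\in\Mc_m$ of ``size'' $N = \max\langle a_i\rangle$ is kept in $Z_m$ either if $\kb\in\Rc_m$ (resonant, hence preserving super-actions as in \eqref{dansleZ}), or if $\kb$ contains at least two indices of height $>N_\varepsilon$ which carry opposite signs in $\U_2$ (the case not covered by \eqref{H3}). All other monomials can be eliminated by dividing by the small divisor, whose size is controlled by \eqref{H1}--\eqref{H3} and gives a loss of $N_\varepsilon^{\alpha(m)}$. The bound $s-s_0 \geq 6r^2\alpha(3r)+2dr$ is precisely calibrated so that these $r$ Birkhoff steps, combined with Proposition~\ref{polyflow}(ii), still leave us with a good remainder of order $\geq r+1$ in a slightly smaller ball.

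After the normal form, $H$ becomes, up to a symplectic change of coordinates $\tau$ with $\|\tau(z)-z\|_s \lesssim \|z\|_s^2$, of the form
\begin{equation*}
H\circ\tau = H_2 + Z^{\mathrm{low}}(\xi_{\le N_\varepsilon}) + Q(\xi_{\le N_\varepsilon}, \xi_{>N_\varepsilon}) + R,
\end{equation*}
where $Z^{\mathrm{low}}$ is the resonant part and depends only on the super-actions $J_n = \sum_{\langle b\rangle = n}|\xi_b|^2$ for $n\le N_\varepsilon$, $R$ is a polynomial remainder of degree $\ge r+1$ small in $\Hc_s$, and the crucial term $Q$ is of the shape
\begin{equation*}
Q = \sum_{\substack{\langle a\rangle,\langle b\rangle > N_\varepsilon \\ \M = 0}} q_{a,b}(\xi_{\le N_\varepsilon},\bar\xi_{\le N_\varepsilon})\,\xi_a\bar\xi_b,
\end{equation*}
with $q_{a,b} = \overline{q_{b,a}}$. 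The key point, forced by the failure of \eqref{mu3} but allowed by \eqref{H1}--\eqref{H3}, is that the high-mode quadratic part $Q$ is Hermitian: only terms pairing one $\xi$ and one $\bar\xi$ survive, since the ``same-sign'' case is killed by \eqref{H3}. Consequently the flow generated by $Q$, viewed as a time-dependent linear operator on $(\xi_a)_{\langle a\rangle > N_\varepsilon}$ once the low modes are frozen, is unitary on $\ell^2$, and the $L^2$ norm $\sum_{\langle a\rangle>N_\varepsilon}|\xi_a|^2$ is an exact invariant of the normal form truncation.

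For the $H^{s_0}$ control of high modes I would differentiate $\tfrac{1}{2}\sum_{\langle a\rangle>N_\varepsilon}\langle a\rangle^{2s_0}|\xi_a|^2$ along the normal form flow. The contribution from $Q$ is a commutator
\begin{equation*}
\sum_{a,b}(\langle a\rangle^{2s_0}-\langle b\rangle^{2s_0})\,q_{a,b}\,\xi_a\bar\xi_b,
\end{equation*}
and thanks to the zero-momentum condition $a-b = -\M(\text{low indices})$ together with $\langle a\rangle^{2s_0}-\langle b\rangle^{2s_0} = O(\langle a\rangle^{2s_0-1}|a-b|)$, this commutator gains one derivative on the high modes and pushes the extra factor onto the low-mode multipliers, which are controlled by $\|\xi_{\le N_\varepsilon}\|_s$. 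Thus one obtains a differential inequality of the form $\tfrac{d}{dt}\|\xi_{>N_\varepsilon}\|_{s_0}^2 \lesssim \|\xi_{\le N_\varepsilon}\|_s\|\xi_{>N_\varepsilon}\|_{s_0}^2 + \text{(small remainder)}$, giving polynomial growth $\|\xi_{>N_\varepsilon}(t)\|_{s_0}^2 \lesssim \|\xi_{>N_\varepsilon}(0)\|_{s_0}^2 \,(1+t)^{s_0}$ as long as low modes stay $O(\varepsilon)$ in $H^s$.

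The conclusion is then a bootstrap. The initial $H^{2s}$ smallness implies $\|\xi_{>N_\varepsilon}(0)\|_{s_0} \lesssim N_\varepsilon^{s_0-2s}\varepsilon$, which at the critical time $t=\varepsilon^{-r/(s_0+1)}$ combined with the polynomial growth yields $\|\xi_{>N_\varepsilon}(t)\|_{s_0}\le \varepsilon^r$; simultaneously the resonant $Z^{\mathrm{low}}$ preserves each $J_n$ exactly, and the remainder $R$ moves the super-actions by at most $\varepsilon^{r+1}t \le \varepsilon^3\langle a\rangle^{-2s}$, which gives \eqref{great} and in particular $\|\xi_{\le N_\varepsilon}(t)\|_s \le 2\varepsilon$. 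I expect the main technical obstacle to be the bookkeeping in the first paragraph: verifying that the Birkhoff step is consistent with the splitting imposed by \eqref{H3}, i.e.\ that the ``non-removable'' monomials left in each $\chi_m$-step really are of the Hermitian form above, and that the small divisors produced by \eqref{H1}--\eqref{H3} fit into the $\kappa_{r,s}$ smallness threshold of Lemma~\ref{exist} after $r$ compositions, yielding the explicit loss $s_*(r)=6r^2\alpha(3r)+2dr$.
\endproof
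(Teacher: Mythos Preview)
Your overall architecture (Birkhoff normal form leaving a Hermitian quadratic coupling in the high modes, then a bootstrap on $\|y_{\le N}\|_s$ and $\|y_{>N}\|_{s_0}$) matches the paper. But the high-mode step, which is the heart of the argument, has a genuine gap.

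You write the differential inequality
\[
\frac{\dd}{\dd t}\|\xi_{>N}\|_{s_0}^2 \;\lesssim\; \|\xi_{\le N}\|_s\,\|\xi_{>N}\|_{s_0}^2 + \text{(small)},
\]
and then assert polynomial growth $\|\xi_{>N}(t)\|_{s_0}^2\lesssim\|\xi_{>N}(0)\|_{s_0}^2(1+t)^{s_0}$. That inference fails: an inequality $\dot x\le C\varepsilon x$ gives $x(t)\le x(0)e^{C\varepsilon t}$, which at the target time $t=\varepsilon^{-r/(s_0+1)}$ blows up as soon as $r>s_0+1$. You correctly note that the commutator $\sum(\langle a\rangle^{2s_0}-\langle b\rangle^{2s_0})q_{ab}\xi_a\bar\xi_b$ gains one derivative, and that the Hermitian structure of $Q$ preserves the $\ell^2$ norm of the high modes, but you never connect the two. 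The paper's mechanism is: the commutator bound actually reads
\[
\frac{\dd}{\dd t}\|y_{>N}\|_{s_0}^2 \;\le\; C\,\|y_{>N}\|_{s_0-1}\,\|y_{>N}\|_{s_0}+C\varepsilon^{3r+\frac52},
\]
with $s_0-1$, not $s_0$, on one factor; then one interpolates $\|y_{>N}\|_{s_0-1}\le\|y_{>N}\|_0^{1/s_0}\|y_{>N}\|_{s_0}^{1-1/s_0}$ and feeds in the $\ell^2$ bound $\|y_{>N}(t)\|_0\le C(1+t)\varepsilon^{2r+1}$ obtained separately from the Hermitian structure. This yields a \emph{sub-linear} ODE $\dot x\le C(1+t)^{1/s_0}\varepsilon^{(2r+1)/s_0}x^{1-1/s_0}$, whose solution (Lemma~\ref{petit_mais_joli}) gives the polynomial bound $\|y_{>N}(t)\|_{s_0}^2\le C\varepsilon^{2r+1}(1+t)^{s_0+1}$. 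Without the interpolation against the conserved $\ell^2$ norm, there is no polynomial control.

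A second, smaller issue: you run Birkhoff only up to degree $r+2$, leaving a remainder of order $r+1$. This is not enough; the remainder then contributes $O(\varepsilon^{r})$ to $X_{\tilde R}$, which after the factor $N^{r\alpha(r)}\sim\varepsilon^{-1/2}$ and integration over $t\le\varepsilon^{-r/(s_0+1)}$ destroys both the $\ell^2$ and the $s_0$ estimates on high modes. The paper performs the normal form to order $3r$ precisely so that the remainder contributes $O(\varepsilon^{3r})$, which is dominated by the $\tilde P^{(iii)}$ contribution $O(\varepsilon^{2r+3})$ throughout; this is also why the loss $s_*(r)=6r^2\alpha(3r)+2dr$ involves $\alpha(3r)$ rather than $\alpha(r)$.
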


%

Note that by playing with the indices, we can obtain the following corollary (in the same vein as Theorem \ref{thm-NLW}) which essentially proves that arbitrary high regularity small solutions to \eqref{Eham2} are controlled over arbitrary long times. 
\begin{corollary}
\label{messiaen}
Let $H$ be a Hamiltonian satisfying Hypothesis \ref{beau}.
For all $r\geq 2$ and $s > d/2$, there exists $s_*(r,s)$ and $\varepsilon_0 (r,s,\omega)>0$ such that for all $\varepsilon < \varepsilon_0(r,s,\omega)$,  
$$
\Norm{\xi(0)}{{s_*}} \leq \varepsilon \Longrightarrow \Norm{\xi(t)}{{s}} \leq 2 \varepsilon \qquad \mathrm{for } \ t \leq \varepsilon^{-r }. 
$$
\end{corollary}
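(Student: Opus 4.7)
\emph{Strategy.} I expect Corollary \ref{messiaen} to follow from Theorem \ref{main} by a reparametrization. Theorem \ref{main} produces a low-mode bound in a strong norm $\Norm{\cdot}{s_T}$ and a high-mode bound in a weaker norm $\Norm{\cdot}{s_0}$ (with $s_0 > d/2$); if I choose the theorem's $s_0$ to coincide with the corollary's target regularity $s$, then the high-mode estimate will already live in the norm I care about, while the low-mode estimate will a fortiori control the $s$-norm of the low modes. This avoids any interpolation step.

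\emph{Choice of parameters.} Given $r \geq 2$ and $s > d/2$ in the corollary, I will set the theorem's $s_0 := s$, take $R := r(s+1)$ in place of the theorem's $r$, and choose the theorem's smoothness index as $s_T := s_0 + 6 R^2 \alpha(3R) + 2dR$, so that condition \eqref{conds} is satisfied for the triple $(R,s_T,s_0)$. I then define the corollary's constants as $s_*(r,s) := 2 s_T$ and $\varepsilon_0(r,s,\omega) := \varepsilon_0(R, s_T, s_0, \omega)$. With these choices, the hypothesis $\Norm{\xi(0)}{s_*} \leq \varepsilon$ is exactly the theorem's hypothesis $\Norm{\xi(0)}{2 s_T} \leq \varepsilon$, and the time window $t \leq \varepsilon^{-R/(s_0+1)}$ of Theorem \ref{main} becomes $t \leq \varepsilon^{-r}$, as required.

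\emph{Combining the two bounds.} For every $t \leq \varepsilon^{-r}$, Theorem \ref{main} will give $\Norm{\xi(t)_{\leq N_\varepsilon}}{s_T}^2 \leq 4\varepsilon^2$ and $\Norm{\xi(t)_{>N_\varepsilon}}{s_0}^2 \leq \varepsilon^{2R}$. Since $s \leq s_T$ and $s = s_0$, and since $\langle a \rangle \geq 1$ on the support of each truncation, each estimate dominates the corresponding part of $\Norm{\cdot}{s}$, giving
\[
\Norm{\xi(t)}{s}^2 = \Norm{\xi(t)_{\leq N_\varepsilon}}{s}^2 + \Norm{\xi(t)_{>N_\varepsilon}}{s}^2 \leq \Norm{\xi(t)_{\leq N_\varepsilon}}{s_T}^2 + \Norm{\xi(t)_{>N_\varepsilon}}{s_0}^2 \leq 4\varepsilon^2 + \varepsilon^{2R}.
\]
Because $R \geq 2r \geq 4$, the tail $\varepsilon^{2R}$ is of much higher order than $\varepsilon^2$ and is absorbed into the dominant $4\varepsilon^2$ by shrinking $\varepsilon_0$, producing $\Norm{\xi(t)}{s} \leq 2\varepsilon$ (if the precise constant $2$ must be preserved rigorously, it suffices to apply Theorem \ref{main} with input bound $\varepsilon(1-\eta)$ for a small $\eta = \eta(\varepsilon) \to 0$). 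I do not anticipate any real obstacle here: the corollary is essentially a bookkeeping consequence of Theorem \ref{main}, the only slightly delicate point being the cosmetic alignment of the multiplicative constant~$2$.
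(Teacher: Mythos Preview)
Your proposal is correct and follows exactly the approach sketched in the paper: set the theorem's $s_0$ equal to the corollary's target $s$, replace the theorem's $r$ by $r(s+1)$ so that the time horizon becomes $\varepsilon^{-r}$, and take $s_*$ large enough (your $2s_T$) to meet both \eqref{conds} and the $\ell^2_{2s_T}$ smallness hypothesis. Your treatment is in fact more careful than the paper's one-line proof, which does not even mention the cosmetic issue with the constant~$2$ that you flag and resolve.
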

\begin{proof}
It is a consequence of the previous Theorem by replacing $r$ by $r(s_0 +1)$, $s_0$ by $s$ and assuming that $s_*$ is large enough with respect to $r$ and $s$.  
\end{proof}

\section{Application to \eqref{nlw} on $\T^d$.}

 Introducing $v=u_t\equiv\dot u$ we rewrite 
 \eqref{nlw} as 
\be\label{systnlw}
 \left\{\begin{array}{ll}
 \dot u &=  
 v,\\
 \dot v &=-\Lambda^2 u    -f(u)\,,
\end{array}\right.
\ee
where $\Lambda=(-\Delta+m)^{1/2}$ and $f \in \mathcal{C}^\infty$ having a zero of order at least two at the origin. 
When $m >0$, we can define
 \be\label{psi}
 \psi =\frac 1{\sqrt 2}(\Lambda^{1/2}u  + i\Lambda^{-1/2}v), \ee
and we get that $(u,v)\in H^s(\T^d,\R)\times H^{s-1}(\T^d,\R)$  is solution of \eqref{systnlw} if and only if $\psi\in H^{s+1/2}(\T^d,\C)$ is solution of
\be\label{nlwpsi}
i \dot \psi =\Lambda \psi+ \frac{1}{\sqrt 2}\Lambda^{-1/2}f\left(\Lambda^{-1/2}\left(\frac{\psi+
\bar\psi}{\sqrt 2}\right)\right)\,.
\ee
Then,  endowing the space   $L^2(\T^d, \C)$ with the standard (real) symplectic structure 
$\ 
-id\psi\wedge d\bar \psi 
$
  equation 
 \eqref{nlwpsi} reads as a Hamiltonian equation
$$
i\dot \psi=\frac{\partial H}{\partial \bar\psi}
$$
where $H$ is the hamiltonian function
$$
H(\psi,\bar\psi)= \frac{1}{(2\pi)^d}\int_{\T^d}(\Lambda \psi)\bar\psi \, \dd x + \frac{1}{(2\pi)^d} \int_{\T^d}F\left(\Lambda^{-1/2}\left(\frac{\psi+\bar\psi}{\sqrt 2}\right)\right)\dd x, 
$$
and $F$ is a primitive of $f$ with respect to the variable $u$, {\em i.e.} $f=\partial_u F$.\\
The linear operator $\Lambda$ is diagonal in the complex Fourier basis\footnote{Here for $a = (a_1,\ldots ,a_d) \in \Z^d$ and $x = (x_1,\ldots\, x_d) \in \T^d$ we set $a \cdot x = a_1 x_1 + \cdots + a_d x_d$} $\{ e^{i a \cdot x}\}_{a\in \Z^d}$, with eigenvalues 
\begin{equation}
\label{freq}
\om_a = \sqrt{|a|^2 + m}, \quad a = (a_1,\ldots,a_d) \in \Z^d, \quad |a|^2 = a_1^2 + \cdots a_d^2.  
\end{equation}
Decomposing $\psi$ in Fourier variables with Fourier coefficients $(\xi_a)_{a\in \Z^d}$ as in \eqref{Exieta}, 
\eqref{nlwpsi} takes the form \eqref{Eham2}
where the Hamiltonian function $H$ is given by
$
H=H_2+P
$
with,  
\begin{align*}
H_2(\xi,\bar \xi)=& \sum_{a\in\Z^d}\om_a |\xi_a|^2, \\
 P(\xi,\bar\xi) =&\frac{1}{(2\pi)^d}\int_{\T^d}F\left(\sum_{a \in \Z^d}\frac{\xi_a e^{i a \cdot x} + \bar \xi_a e^{- i a \cdot x} }{\sqrt{ 2\om_a}}\right)\dd x.\end{align*}
 As $F$ is smooth, the function $P$ is in $\Hc_s$ and we can define its flow.
Finally $(u,v)\in H^s(\T^d,\R)\times H^{s-1}(\T^d,\R)$  is a solution of \eqref{systnlw} if and only if   $\xi \in \ell^2_{s- 1/2}$ is a solution of the Hamiltonian system associated with the Hamiltonian $H$. 

In order to apply Theorem \ref{main}, we need the following result
\begin{proposition}[\cite{Del09}, Theorem 2.1.1]
\label{NR} For almost all $m\in (0,+\infty)$ the family of frequencies 
$$
 \omega_a(m) = \sqrt{|a|^2 + m}, \quad a \in \Z^d
 $$
 associated with \eqref{nlw} is non resonant in the sense of Definition \ref{defNR}.
\end{proposition}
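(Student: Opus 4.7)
The plan is to establish \eqref{H1}--\eqref{H3} on a full-measure set of masses $m$ by combining a quantitative small-divisor bound for each individual multi-index with a Borel--Cantelli argument. As pointed out in the remark preceding the statement, \eqref{H2} and \eqref{H3} reduce to \eqref{H1}: the zero-momentum condition in \eqref{H2} bounds $\langle b\rangle$ by $rN$ directly, while for \eqref{H3} one observes that $\omega_{b_1} + \omega_{b_2} \sim |b_1| + |b_2|$, so the left-hand side is automatically large unless $|b_1|, |b_2| \leq C(r)N$, reducing again to \eqref{H1} applied with length $r+2$ and a slightly enlarged scale. So the heart of the proof is \eqref{H1} alone.

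For fixed $r$, $N$ and $\kb = (\delta_i, a_i)_{i=1}^r$ with $\langle a_i\rangle \leq N$ and $\kb \notin \mathcal{R}_r$, I would study the small divisor
$$
g_{\kb}(m) = \sum_{i=1}^r \delta_i \sqrt{|a_i|^2 + m}
$$
as a smooth function on any fixed compact interval $[m_0, m_1] \subset (0, +\infty)$. Since $\omega_a$ depends only on $|a|$, $g_{\kb}$ only sees the grouped data $\alpha_n := \sum_{|a_i| = n} \delta_i$, and the condition $\kb \notin \mathcal{R}_r$ precisely ensures that $\alpha_n \neq 0$ for at least one integer $n$. The central analytic input is a quantitative derivative lower bound: there exist $k \in \{0,1,\dots,r\}$, a constant $c_r > 0$ and an exponent $\beta(r) = O(r)$ such that
$$
\max_{0 \leq k \leq r}|g_{\kb}^{(k)}(m)| \geq c_r N^{-\beta(r)}
$$
uniformly on $[m_0, m_1]$. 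This follows from the fact that $g_{\kb}^{(k)}$ is a nonzero multiple of $\sum_n \alpha_n (n^2 + m)^{1/2-k}$, together with a Vandermonde / divided-difference argument in the $r$ quantities $x_n = (n^2+m)^{-1}$, which are pairwise separated by at least $O(N^{-4})$.

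Given the derivative bound, the classical one-dimensional sublevel set estimate yields
$$
\meas\bigl\{m \in [m_0, m_1] : |g_{\kb}(m)| \leq \eta \bigr\} \leq C_r \bigl(\eta\, N^{\beta(r)}\bigr)^{1/r}.
$$
Choosing $\eta = \gamma_r N^{-\alpha(r)}$ with $\alpha(r)$ much larger than $\beta(r) + rd$ and summing over the at most $(2N+1)^{rd}$ relevant multi-indices $\kb$ produces an excluded set of measure at most $C_r\, \gamma_r^{1/r}$ at scale $N$. A further union over $N$ and $r$ combined with a diagonal choice $\gamma_r \to 0$, and an exhaustion of $(0,+\infty)$ by compact intervals, yields a full-measure set of $m$ satisfying \eqref{H1} simultaneously for all $r$ and $N$.

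The main obstacle is the quantitative non-degeneracy estimate on $g_{\kb}^{(k)}$. In dimension $d = 1$ all moduli are already distinct up to sign and this is a plain Vandermonde computation. In dimension $d \geq 2$ the large multiplicities of the spheres $\{|a| = n\}$ force one to pass from $\kb$ to the grouped data $(\alpha_n)$ and to check that the algebraic definition \eqref{eqRm} of $\mathcal{R}_r$ is exactly the kernel of this grouping; matching this algebraic characterization of resonance with the analytic non-vanishing of $g_{\kb}$ is the real content of the proposition, and is where Delort's argument does its main work.
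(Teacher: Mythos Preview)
The paper does not actually prove this proposition: it is quoted as Theorem 2.1.1 of \cite{Del09}, with only the remark that a direct proof can alternatively be extracted from the arguments of \cite{Bam03}, \cite{EGK16} or \cite{FGL13}, and the observation that $\alpha(r)=\mathcal{O}(r^3)$. So there is no in-paper argument to compare against.

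Your sketch is the standard strategy underlying those references and is correct in outline. The reduction of \eqref{H2}--\eqref{H3} to \eqref{H1} is exactly the content of the remark you invoke; the identification ``$\kb\notin\Rc_r \Longleftrightarrow$ some grouped coefficient $\alpha_n\neq 0$'' is the right algebraic translation of \eqref{eqRm}; and the Vandermonde argument on the family $(n^2+m)^{1/2-k}$, followed by the sublevel-set lemma and Borel--Cantelli, is precisely how Delort and Bambusi proceed. Two small corrections that do not affect the scheme: the exponent $\beta(r)$ coming out of the Vandermonde inverse is $\mathcal{O}(r^2)$ rather than $\mathcal{O}(r)$ (each row of the inverse carries $\sim p-1$ factors of $|x_i-x_j|^{-1}\sim N^4$), which is what ultimately produces $\alpha(r)=\mathcal{O}(r^3)$; and the sublevel-set estimate in the form you state it, with a \emph{maximum} over $k$ rather than a single fixed $k$, needs a short extra step (partitioning $[m_0,m_1]$ into subintervals on which one derivative dominates, or appealing to Pyartli's lemma in its full form).
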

A direct proof of this proposition can also be done by using the arguments given in \cite{Bam03,EGK16} or \cite{FGL13}. By following these proofs, one can verify that $\alpha(r)$ is of order $\mathcal{O}(r^3)$. 

As a consequence, Theorem \ref{main} applies. By scaling back to the variable $(u,v)$, we obtain
\begin{theorem}\label{thm-NLW2}
Let $f$ be a $\mathcal{C}^\infty$ function with zero of order at least $2$ at the origin. 
Then for almost all  $m \in (0,+\infty)$ and for all $r\geq2$, and all $s_0 > (d + 1)/2$, there exists $s_1(r,s_0)$ such that for all $s \geq s_1(r,s_0)$, there exists $\varepsilon_0(r,s_0,s,m)$ and for all $\varepsilon < \varepsilon_0(r,s_0,s,m)$, if $(u(0),\dot u(0))  \in H^{s}\times H^{s - 1}(\T)$ satisfies $\Norm{(u(0),\dot u(0)) }{H^{s}\times H^{s - 1}} \leq \varepsilon$, then the system \eqref{nlw} admits a solution over a time $T \geq \varepsilon^{-\frac r{s_0+1}}$, and we have 
$$ \left\{ \begin{array}{lll}\Norm{u(t)_{\leq N_{\varepsilon}}}{s}\leq 2 \eps\\[2ex] \Norm{u(t)_{> N_\varepsilon}}{s_0} \leq \eps^{r} \end{array} \right. \quad\text{for}\quad t\leq \eps^{-\frac r{s_0+1}}$$
where $N_\varepsilon :=\eps^{-\frac r{s -s_0}}$.
\end{theorem}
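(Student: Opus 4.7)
The proof will be a direct application of Theorem \ref{main} to the Hamiltonian formulation of \eqref{nlw} developed above. Three things must be checked: Hypothesis \ref{beau} for $H = H_2 + P$ in the symplectic variables $\xi$, the norm equivalence between $(u,\dot u)$ and $\xi$, and the translation of the abstract conclusions into the claimed estimates on $u$.

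For Hypothesis \ref{beau}, the non-resonance condition \eqref{H1}--\eqref{H3} for $\omega_a = \sqrt{|a|^2 + m}$ holds on a full-measure set of parameters $m$ by Proposition \ref{NR}. Since $f \in \mathcal{C}^\infty$ vanishes at order $\geq 2$ at the origin, its primitive $F$ vanishes at order $\geq 3$, so $P$ has a zero of order $\geq 3$ at $\xi = 0$; smoothness of $F$ together with the weights $(2\omega_a)^{-1/2} \sim \langle a\rangle^{-1/2}$ appearing in the representation of $P$ give $P \in \mathcal{H}_\sigma$ for every $\sigma > d/2$. From the change of variables \eqref{psi} and $\omega_a \sim \langle a \rangle$, one has the equivalence $\|(u,\dot u)\|_{H^\sigma \times H^{\sigma-1}} \sim \|\xi\|_{\sigma-1/2}$; moreover the low/high decomposition at threshold $N_\varepsilon$ is preserved since both partitions sort Fourier modes by $\langle a \rangle$.

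Next, I would apply Theorem \ref{main} with $\sigma_0 := s_0 - 1/2$ (so that $\sigma_0 > d/2$, using the hypothesis $s_0 > (d+1)/2$) and a larger $\sigma$ chosen close to $s - 1/2$, the threshold $s_1(r,s_0)$ being taken large enough that the gap condition $\sigma - \sigma_0 \geq s_*(r) = 6r^2 \alpha(3r) + 2dr$ holds. The norm equivalence turns the size-$\varepsilon$ hypothesis on the data into a bound $\|\xi(0)\|_{2\sigma} \leq C\varepsilon$ (after absorbing $C$ into $\varepsilon_0$). Theorem \ref{main} then yields the solution $\xi(t)$ on $[0,\varepsilon^{-r/(s_0+1)}]$ together with the $\ell^2_\sigma$ control on low modes, the $\ell^2_{\sigma_0}$ control on high modes, and the super-action preservation \eqref{great}. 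The high-mode bound translates directly into $\|u(t)_{>N_\varepsilon}\|_{H^{s_0}} \lesssim \varepsilon^r$. For the $H^s$ bound on the low modes, I would decompose $\|u(t)_{\leq N_\varepsilon}\|_{H^s}^2 \sim \sum_{n \leq N_\varepsilon} n^{2s-1} J_n(t)$ with $J_n = \sum_{\langle b \rangle = n}|\xi_b|^2$, use \eqref{great} to control $|J_n(t) - J_n(0)|$ by $\varepsilon^3 n^{-2\sigma}$, and check that for $s_1$ large enough the resulting perturbation is negligible compared to $\varepsilon^2$.

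The main subtlety is this last step: the $H^s$ control of the low modes is recovered not from a direct norm estimate but from the preservation of shell super-actions, which is precisely what carries the full Sobolev weight of the initial data along in time. Beyond this index-matching bookkeeping, the proof introduces no genuinely new analytical difficulty, all the nonlinear normal-form work being already encapsulated in Theorem \ref{main}.
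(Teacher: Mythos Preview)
Your overall plan is exactly the paper's: verify Hypothesis \ref{beau} via Proposition \ref{NR} and the smoothness of $F$, transfer to the symplectic variable $\xi$ using the equivalence $\Norm{(u,\dot u)}{H^\sigma\times H^{\sigma-1}}\sim\Norm{\xi}{\sigma-1/2}$, apply Theorem \ref{main}, and read off the conclusions. The paper gives no further argument for Theorem~\ref{thm-NLW2}, so in that sense you match it.

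There is however a genuine index gap in your write-up. You propose to apply Theorem \ref{main} with $\sigma$ ``close to $s-\tfrac12$'' and then assert that the data hypothesis $\Norm{(u(0),\dot u(0))}{H^s\times H^{s-1}}\leq\varepsilon$ yields $\Norm{\xi(0)}{2\sigma}\leq C\varepsilon$. This is false: the norm equivalence only gives $\Norm{\xi(0)}{s-1/2}\leq C\varepsilon$, so to match the hypothesis $\Norm{\xi(0)}{2\sigma}\leq\varepsilon$ of Theorem \ref{main} you would need $2\sigma\leq s-\tfrac12$, i.e.\ $\sigma\approx\tfrac12(s-\tfrac12)$, not $\sigma\approx s-\tfrac12$. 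Your subsequent attempt to recover the $H^s$ low-mode bound from \eqref{great} then fails: with $\sigma\approx s/2$ the perturbation $\sum_{n\leq N_\varepsilon} n^{2s-1}|J_n(t)-J_n(0)|\leq \varepsilon^3\sum_{n\leq N_\varepsilon} n^{2s-1-2\sigma}$ carries an exponent of order $3-2r$, which is negative for $r\geq2$. In fact this mismatch already sits in the paper's statement of Theorem~\ref{thm-NLW2}; compare with the discussion around \eqref{satie}, where the initial data are taken in $H^{2s}\times H^{2s-1}$. With that corrected hypothesis one takes $\sigma=s-\tfrac12$, the data condition $\Norm{\xi(0)}{2\sigma}\leq C\varepsilon$ holds, and the direct bound $\Norm{\xi(t)_{\leq N_\varepsilon}}{\sigma}\leq 2\varepsilon$ from Theorem \ref{main} is already the claimed $H^s$ estimate on $u_{\leq N_\varepsilon}$, so the detour through \eqref{great} becomes unnecessary.
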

Using that $\alpha(r)=\mathcal{O}(r^3)$ we can verify that $s_1(r,s)\simeq (rs)^5$. Then Theorem \ref{thm-NLW} is just a reformulation of Corollary \ref{messiaen}.

\section{Normal form}

The strategy used to prove Theorem \ref{main} is to put the original Hamiltonian \eqref{H} into normal form eliminating most of interactions between the low and high frequencies. By using Taylor expansion at the origin, the Hamiltonian $H$ can be written 
\be
\label{H12} H = H_2  +\sum_{m=3}^r P_m  + R_{r+1}, \quad\mbox{with}\quad H_2 =  \sum_{ a  \in \mathbb{Z}^d}\om_a |\xi_a |^2, 
\ee
where $P_m$ is a homogeneous polynomial of degree $m$, and where $R_{r+1} \in \Hc_s$ is of order $r +1$ which means that its differentials vanish up to the order $r$. In particular, it is small in the sense that we have 
\begin{equation}\label{banff}
\Norm{X_{R_{r+1}}(z)}{s} \leq (C_{s})^r\Norm{z}{s}^{r}
\end{equation}
for some constant depending only on $s$ and for $z$ small enough in $\ell_s^2$.

For 
$\jb \in \Mc_m$ let us denote by $\mu_n(\jb)$  the $n$-th largest number amongst the collection $\langle j_i \rangle_{i = 1}^m$: 
$$
\mu_1(\jb)  \geq  \mu_2(\jb) \geq \cdots \geq  \mu_m(\jb). 
$$  
By convention, we will also set $\mu_0(\jb) = +\infty$. 

Let $N$ be a fixed number and let $H$ be a Hamiltonian satisfying Hypothesis \ref{beau}. For a given $r$ we decompose   the Hamiltonian $H$ in \eqref{H12} as follows: 
$$
 H = H_2  +\sum_{m=3}^r (P_m^{(\circ)} +  P_m^{(i)} + P_m^{(ii)} + P_m^{(iii)})  + R_{r+1}, 
$$
where for all $m$, 
\begin{itemize}
\item the polynomial $P_m^{(\circ)} = P[c^{(\circ)}_m]$, depends only of low modes:\\  
\begin{equation}
\label{lowmodes}
 \forall\jb \in \Mc_m,\quad (c^{(\circ)}_m)_{\jb} \neq  0 \quad \Longrightarrow \quad \mu_1(\jb) \leq  N. 
\end{equation}
\item $P_m^{(i)} = P[c^{(i)}_m]$ contains only one high mode:\\
\begin{equation}
\label{onemode}
 \forall\jb \in \Mc_m,\quad (c^{(i)}_m)_{\jb} \neq 0 \quad \Longrightarrow \quad  \mu_1(\jb) > N \geq \mu_2(\jb),
\end{equation}
\item $P_m^{(ii)} = P[c^{(ii)}_m]$ contains only two high modes:\\ 
\begin{equation}
\label{twomodes}
 \forall\jb \in \Mc_m,\quad (c^{(ii)}_m)_{\jb} \neq 0 \quad \Longrightarrow \quad \mu_2(\jb) > N \geq \mu_3(\jb), 
\end{equation}
\item 
 and $P_m^{(iii)} = P[c^{(iii)}_m]$ contains at least three high modes:\\
\begin{equation}
\label{threemodes}
 \forall\jb \in \Mc_m,\quad (c^{(iii)}_m)_{\jb} \neq 0
\quad \Longrightarrow \quad \mu_3(\jb) >  N. 
\end{equation}
\end{itemize}
Following \cite{BG06}, we know that polynomials of the form $P_m^{(iii)} = P[c^{(iii)}_m]$ are already small in the sense that $$\Norm{X_{P_m^{(iii)}}}{s}\leq CN^{-s}\|z\|_s^{m-1}.$$
On the other hand, thanks to our non resonance condition,  polynomials of the form $P_m^{(i)} = P[c^{(i)}_m]$ can be killed by a symplectic change of variables since it cannot be resonant (only one high mode) and polynomials of the form $P_m^{(\circ)} = P[c^{(\circ)}_m]$ can be normalized by a standard Birkhoff normal form procedure. In addition to these two known facts, the following Theorem says that we can also symmetrize polynomials of the form $P_m^{(ii)} = P[c^{(ii)}_m]$:

\begin{theorem}
\label{normalform}
Assume that the frequencies $\omega = (\omega_a)_{a \in \Z^d}$ are non resonant in the sense of Definition \ref{defNR}, and let $r\geq2$ be given.  There exists a constant $C$ depending on $r$, such that for all $N\geq1$, there exists a polynomial Hamiltonian 
\begin{equation}
\label{chi}
\chi = \sum_{m=3}^r (\chi_m^{(\circ)} +  \chi_m^{(i)} + \chi_m^{(ii)}) 
\end{equation}
such that $\chi_m^{(\circ)} = P[a_m^{(\circ)}]$, $\chi_m^{(i)} = P[a_m^{(i)}]$, $\chi_m^{(ii)} = P[a_m^{(ii)}]$, contain zero, one and two high modes respectively (i.e. satisfy \eqref{lowmodes}, \eqref{onemode} and  \eqref{twomodes} respectively), with coefficients satisfying 
\begin{equation}
\label{masmoudi}
\Norm{a_m^{(\circ)}}{\ell^\infty} + \Norm{a_m^{(i)}}{\ell^\infty} + \Norm{a_m^{(ii)}}{\ell^\infty} \leq C N^{r \alpha(r)},
\end{equation}
and such that the Lie transformation $\Phi_{\chi}^1$, whose existence is guaranteed in a neighborhood of the origin of $\ell^2_s$ for all $s>d/2$ by Lemma \ref{exist}, puts $H$ in normal form:
\begin{equation}
\label{sansdec}
H \circ \Phi_{\chi}^1 = H_2  +\sum_{m=3}^r (Z_m^{(\circ)} +  S_m^{(ii)} + \tilde P_m^{(iii)})  + \tilde R_{r+1}, 
\end{equation}
where 
$
Z_m^{(\circ)} = P[b_m^{(\circ)}]$, $S_m^{(ii)} = P[b_m^{(ii)}]$, $\tilde P_m^{(iii)} = P[b_m^{(iii)}]$, contain zero, two and at least three high modes respectively (i.e. satisfy \eqref{lowmodes}, \eqref{twomodes}, \eqref{threemodes}), with coefficients satisfying 
$$
\Norm{b_m^{(\circ)}}{\ell^\infty} + \Norm{b_m^{(ii)}}{\ell^\infty} + \Norm{b_m^{(iii)}}{\ell^\infty} \leq C N^{r \alpha(r)}. 
$$
Moreover \begin{itemize}
\item $Z_m^{(\circ)}$ contains only resonant monomials, which means that 
$$
\forall\, \jb \in \Mc_m, \quad 
\jb \notin \Rc_m  \Longrightarrow \quad (b^{(\circ)}_m)_{\jb} = 0. 
$$
\item $S_m^{(ii)}$ contains terms that are symmetric in the high modes which means that if $b_\jb^{(ii)} \neq 0$ for $\jb \in \Mc_m$, the two highest modes are of opposite signs: they are of the form $(\delta,a)$ and $(-\delta,b)$ for some $a$ and $b \in \Z^d$.

\item The remainder term $\tilde R_{r+1}$ is of the form
\begin{equation}
\label{last?}
\tilde R_{r+1} = R_{r+1} \circ \Phi_{\chi}^1 +  \int_0^1 (1 - s)^{r+1}P[b_{r+1}]\circ \Phi_\chi^s \dd s
\end{equation}
 where $P[b_{r+1}]$ defines a homogeneous polynomial of order $r+1$ with coefficients bounded by $\Norm{b_{r+1}}{\ell^\infty} \leq C N^{r\alpha(r)}$. 

\end{itemize}
\end{theorem}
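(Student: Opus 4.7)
The plan is to construct $\chi$ degree by degree for $m = 3, \dots, r$, at each step solving a cohomological equation that kills the non-resonant monomials with zero or one high mode and the antisymmetric monomials with two high modes. Set $H^{(3)} := H$ and assume inductively that
$$H^{(m)} = H_2 + \sum_{k=3}^{m-1}\bigl(Z_k^{(\circ)} + S_k^{(ii)} + \tilde P_k^{(iii)}\bigr) + \sum_{k=m}^{r}\bigl(P_k^{(\circ)} + P_k^{(i)} + P_k^{(ii)} + P_k^{(iii)}\bigr) + R^{(m)}_{r+1},$$
with all homogeneous coefficients bounded in $\ell^\infty$ by some $C_m N^{(m-3)\alpha(r)}$. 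I would seek $\chi_m = \chi_m^{(\circ)} + \chi_m^{(i)} + \chi_m^{(ii)}$ homogeneous of degree $m$, set $H^{(m+1)} := H^{(m)} \circ \Phi_{\chi_m}^1$, and use the identity $\{H_2, z_\jb\} = -i\Omega(\jb)\, z_\jb$ with $\Omega(\jb) = \sum_i \delta_i \omega_{a_i}$ to reduce everything to the cohomological equation
$$\{H_2, \chi_m\} + P_m^{(\circ)} + P_m^{(i)} + P_m^{(ii)} = Z_m^{(\circ)} + S_m^{(ii)}.$$

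Solving this is algebraic division, done one monomial class at a time. For $P_m^{(\circ)}$ I would set $(a_m^{(\circ)})_\jb := (c_m^{(\circ)})_\jb/(i\Omega(\jb))$ when $\jb \notin \Rc_m$ and zero otherwise, relegating the unavoidable resonant coefficients to $Z_m^{(\circ)}$; hypothesis \eqref{H1} yields $|\Omega(\jb)| \geq \gamma_m N^{-\alpha(m)}$. For $P_m^{(i)}$, the zero-momentum constraint forces the unique high index to satisfy $\langle b \rangle \leq (m-1)N$, so \eqref{H2} applies and the whole term can be eliminated. For $P_m^{(ii)}$ I would split according to the signs $(\delta_1, \delta_2)$ of the two high indices $(\delta_1, b_1), (\delta_2, b_2)$: when $\delta_1 = \delta_2$, hypothesis \eqref{H3} gives a usable lower bound on $|\Omega(\jb)|$ and these terms are killed; when $\delta_1 = -\delta_2$ the divisor $\omega_{b_1} - \omega_{b_2}$ may genuinely degenerate, and those symmetric monomials are left as $S_m^{(ii)}$.

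Next I would exploit the Taylor identity
$$H^{(m)} \circ \Phi_{\chi_m}^1 = \sum_{k=0}^{r-m}\frac{1}{k!}\ad_{\chi_m}^k H^{(m)} + \int_0^1 \frac{(1-s)^{r-m}}{(r-m)!}\ad_{\chi_m}^{r-m+1} H^{(m)} \circ \Phi_{\chi_m}^s \, \dd s,$$
and redistribute the iterated Poisson brackets appearing in the finite sum into the four classes $(\circ), (i), (ii), (iii)$ according to the high-mode count of each produced monomial. The key geometric observation is that a Poisson contraction can only preserve or decrease the total number of high modes in the product of two monomials, so three-or-more-high-mode contributions propagate only upward in that classification and are absorbed into $\tilde P_m^{(iii)}$. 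Monomials of degree exceeding $r$ are absorbed into $R^{(m+1)}_{r+1}$, and together with the integral remainder and $R^{(m)}_{r+1}\circ \Phi_{\chi_m}^1$ this builds up \eqref{last?}.

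The main obstacle is the bookkeeping of $\ell^\infty$ coefficient norms through the $r-2$ iterations: each cohomological division inflates norms by $N^{\alpha(m)}/\gamma_m$, while each Poisson bracket costs a factor $\lesssim m^2$ by Proposition \ref{polyflow}(ii). Compounding through all iterated brackets and all degrees, I would verify by a direct induction that the norms remain bounded by $C N^{r\alpha(r)}$ as required by \eqref{masmoudi}. The substantively new ingredient beyond the classical Birkhoff normal form is the Case analysis for $P_m^{(ii)}$: though algebraically elementary, identifying the unremovable obstruction as precisely the symmetric quadratic form in the two high modes is what will make the $L^2$-conservation argument for high modes possible in the subsequent proof of Theorem \ref{main}.
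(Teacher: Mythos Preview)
Your proposal is correct and follows essentially the same strategy as the paper: split each homogeneous piece according to the number of high modes, solve the homological equation using \eqref{H1}, \eqref{H2}, \eqref{H3} respectively for the $(\circ)$, $(i)$, and antisymmetric $(ii)$ parts, and leave the resonant low-mode terms together with the symmetric two-high-mode terms as the obstruction. The identification of $S_m^{(ii)}$ as precisely the monomials whose two high indices carry opposite signs, and the use of \eqref{H3} only when they carry the same sign, is exactly the paper's point.

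The one structural difference worth noting is that you build the transformation iteratively as a composition $\Phi_{\chi_3}^1 \circ \cdots \circ \Phi_{\chi_r}^1$, whereas the paper works with a single generator $\chi = \sum_{m=3}^r \chi_m$ and a single time-one map $\Phi_\chi^1$, solving the hierarchy of homological equations $\{H_2,\chi_m\} = Q_m - X_m$ in the formal series algebra (so that $Q_m$ already incorporates the iterated brackets of the lower-degree $\chi_k$). Both constructions are standard and give equivalent normal forms with the same coefficient bounds, but the theorem as stated asks for a single $\Phi_\chi^1$ and the specific remainder representation \eqref{last?}, which comes directly from the Taylor expansion of $t \mapsto H \circ \Phi_\chi^t$ at $t=1$. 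Your iterative remainder is a nested composition of such integrals and does not literally have the form \eqref{last?}; to match the statement exactly you would either recast the construction as the paper does, or add a line explaining that the composition of your flows is itself the time-one map of a polynomial Hamiltonian with the same structure and bounds.
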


\proof
The proof is standard and use the non resonant Birkhoff normal form procedure (see \cite{BG06,BDGS07,Gre07}). We follow here the construction made in \cite{FG13}. 
By using the formal series expansions $H = H_2 + \sum_{m \geq 3}P_m$ and $\chi = \sum_{m \geq 3} \chi_m$ in homogeneous polynomials, the formal normal form problem is to find $\chi$ and $X$ (under normal form) such that 
$$
H \circ \Phi_\chi^{1} = \sum_{k \geq 0} {\rm ad}_{\chi}^k (H_2 + P) = H_2 + X. 
$$
In the formal series algebra, this problem is equivalent to a sequence of homological equation of the form
$$
\forall\, m \geq 3, \quad 
\{ H_2,\chi_m\} = Q_m - X_m, 
$$
where $Q_m$ depends on the function $P_k$, $\chi_k$ and $X_{k}$, $k < m$ previously constructed.  It is obtained by iterated Poisson brackets preserving the homogeneity of polynomial and boundedness of coefficients (see \eqref{Ebrack}). Formula for $Q_m$ can be found in \cite{FG13}, Eq. (3.4). 

Now assume that $Q_m = \sum_{\jb \in \Mc_m} q_\jb z_\jb$ is given. For a given $N$ we can decompose it into terms containing zero, one, two and at least three high modes: $Q_m = Q_m^{(\circ)} +  Q_m^{(i)} + Q_m^{(ii)} + Q_m^{(iii)}$ and a similar decomposition for the coefficients $q_\jb$. The normal form term $X_m$ is then the sum of the resonant terms in $Q_m^{(\circ)}$ (contributing to the term $Z^{(\circ)}_m$), the symmetric part of $Q_m^{(ii)}$ (contributing to the term $S_m^{(ii)}$), and the term $Q_m^{(iii)}$ (contributing to the term $\tilde P^{(iii)}_m$). By noting that 
$$
\{H_2,z_\kb\} = i \Omega(\kb) z_\kb \quad \mbox{with} \quad 
\Omega(\kb) = - \sum_{i = 1}^m \delta_i \omega_{a_i}, \quad \kb = ((\delta_i,a_i))_{i = 1}^m \in \Mc_m, 
$$
we then solve the other terms by setting 
$$
\chi_m = \sum_{\jb \in \Mc_m\backslash\Rc_m} \frac{q_\jb^{(\circ)}}{i \Omega(\jb)} z_\jb + \sum_{\jb \in \Mc_m} \frac{q_\jb^{(i)}}{i \Omega(\jb)} z_\jb + \sum_{\jb \in \Mc_m\backslash\Sc_m} \frac{q_\jb^{(ii)}}{i \Omega(\jb)} z_\jb, 
$$
where $\Sc_m$ denote the set of indices with two symmetric high modes. Note that when $\jb \in \Mc_m \backslash \Sc_m$, the two highest modes (larger that $N$) have the same sign, and the denominator $\Omega(\jb)$ is controlled by \eqref{H3}. Similarly, the first term can be controlled using \eqref{H1} and the second using \eqref{H2}. We then observe that we loose a factor $N^{\alpha(m)}$ after each solution of the homological equation, yielding a bound of order $N^{r\alpha(r)}$ after $r$ iterations. Note also that all the operations (solution of the Homological equation and Poisson brackets) preserve homogeneity and the reality of the global Hamiltonians. 

It is easy to see that for all $s > s_0$ for $z$ small enough (such that $C_{r,s} N^{r \alpha(r)} \Norm{z}{s} \leq 1$ for some constant $C_{r,s}$ by using \eqref{Echamp}) the flow $\Phi_\chi^1(z)$ is well defined and locally invertible in $\ell_s^2$ (its inverse being $\Phi_\chi^{-1}$), see Lemma \ref{exist}. 

Finally to obtain \eqref{sansdec} we use a Taylor expansion of the term $H \circ \Phi_\chi^t$ for $t \in (0,1)$. \endproof

For $\xi \in \ell_s^2$, 
we define the {\it pseudo-actions}:
\begin{equation}
\label{gigi}
J_a(\xi,\bar \xi) =\sum_{\substack{b \in \Z^d \\ \langle a\rangle = \langle b\rangle}} |\xi_b|^2 \qquad (\mbox{and} \quad J_j = \frac12 \sum_{\substack{ \ell\in \U_2 \times\mathbb{Z}^d \\    \langle \ell \rangle=\langle j \rangle}}|z_\ell|^2, \quad j \in \U_2 \times\mathbb{Z}^d).
\end{equation} 
By definition of the resonant set $\Rc_m$ (see \eqref{eqRm}) and the corresponding resonant monomials (see \eqref{dansleZ}) we see that for all $m$, the normal form terms $Z_m^{(\circ)}$ can be written\footnote{Recall that $Z_{m}^{(\circ)} = 0$ when $m$ is odd.} 
$$
Z_m^{(\circ)}(\xi,\bar \xi)  = \sum_{\substack{\kb \in \mathcal{R}_{m} \\ \forall i, \langle k_i \rangle \leq N}} c_\kb z_\kb = \sum_{\substack{\ab,\bb \in (\mathbb{Z}^d)^\frac{m}{2}, \\
 			  \forall i,  \langle a_i \rangle = \langle b_i \rangle\leq N}} c_{\ab\bb} \prod_{i=1}^{\frac{m}{2}} \xi_{a_i} \bar \xi_{b_i}, \quad z = (\xi,\bar \xi).
$$
We notice that a polynomials in normal form commutes with all the {\it pseudo-actions}:
\begin{equation}
\label{commuteboy}
\{Z_m^{(\circ)},J_a\}=0, \quad  \forall a \in \mathbb{Z},
\end{equation}
in such a way the flow generated by $Z_m^{(\circ)}$ will not modify the $\ell_s^2$ norms since $\Norm{z}{s}^2=\sum (1+a^2)^{2s}J_a$.
\section{Proof of the main Theorem}

To prove the main Theorem, we will need the following Lemma which controls polynomial vector fields in mixed Sobolev norms, as high and low modes are not controled at the same Sobolev scale. 
\begin{lemma}
\label{protusion}
For all $s \geq s_0>d/2$, there exists a constant $C$ such that 
for all $m \geq 3$, for all $N\geq1$ and for all $c \in \ell^\infty(\Mc_m)$, if the polynomials $P[c]$ contains at least $n$-th high modes, $n = 0,1,2,3$ \textrm{(}{\em i.e.} $c_\jb \neq 0 \Longrightarrow \mu_{n}(\jb)  > N$\textrm{)} then  we have for $z = (\xi,\bar \xi)$ with $z_{\leq N}\in\ell_s^2$ and $z_{ > N}\in \ell^2_{s_0}$
\begin{equation}
\label{belami}
\Norm{X_{P[c]}(z)_{\leq N}}{s} \leq 
\left\{
\begin{array}{ll}
C^m\Norm{c}{\ell^\infty}\Norm{z_{\leq N}}{s}^{m-1}& \mbox{if} \quad n = 0 \\[2ex]
C^m\Norm{c}{\ell^\infty} N^{n(s-s_0)}  \Norm{z_{\leq N}}{s}^{m-n - 1} \Norm{z_{ > N}}{s_0}^{n} &\mbox{if}\quad n \geq 1,
\end{array}
\right.
\end{equation}
and 
\begin{equation}
\label{horla}
\Norm{X_{P[c]}(z)_{> N}}{s_0} 
\leq
\left\{
\begin{array}{ll}
0 & \mbox{if}\quad n = 0, \\[2ex]
  C^m\Norm{c}{\ell^\infty}  N^{s_0 - s}\Norm{z_{\leq  N}}{s}^{m-n - 1 }& \mbox{if} \quad n = 1, \\[2ex]
C^m\Norm{c}{\ell^\infty}\Norm{z_{\leq  N}}{s}^{m-n }  \Norm{z_{> N}}{s_0}^{n - 1}&\mbox{if}\quad n \geq 2.
\end{array}
\right.
\end{equation}
\end{lemma}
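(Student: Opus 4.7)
The plan is to reduce each component $(X_{P[c]}(z))_j$ to a convolution involving the auxiliary real functions
\[
V_L(x) = \sum_{\langle a\rangle \leq N}\hat\xi_a\, e^{i a\cdot x},\qquad V_H(x) = \sum_{\langle a\rangle > N}\hat\xi_a\, e^{i a\cdot x},\qquad \hat\xi_a := |\xi_a|+|\xi_{-a}|,
\]
which satisfy $\|V_L\|_{H^s(\T^d)} \lesssim \|\xi_{\leq N}\|_s$ and $\|V_H\|_{H^{s_0}(\T^d)} \lesssim \|\xi_{>N}\|_{s_0}$. Starting from $(X_{P[c]}(z))_j = -i\delta_j m \sum_{\kb\,:\,(\bar j,\kb)\in\Mc_m} c_{(\bar j,\kb)}\, z_\kb$ (with $c$ symmetric without loss of generality), passing to moduli, and performing the change of variables $b'_i := \epsilon_i b_i$ (which absorbs the sign factors into $\hat\xi_{-a}=\hat\xi_a$), one arrives at
\[
|(X_{P[c]}(z))_j|\ \leq\ C^m\, \|c\|_{\ell^\infty} \sum_{k \geq n} \bigl(V_L^{\,m-1-k}\, V_H^{\,k}\bigr)_{\delta_j a_j},
\]
where $n$ denotes the number of forced high modes and the binomial factors $\binom{m-1}{k}$ (for positions of high/low among the $m-1$ slots) together with $2^{m-1}$ (for signs) have been absorbed into $C^m$.

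For the low-frequency estimate \eqref{belami}, the case $n=0$ forces the polynomial to involve only low modes, so only the $k=0$ term contributes and the convolution reduces to $(V_L^{m-1})_{\delta_j a_j}$; the $H^s$-algebra property (valid for $s>d/2$) then gives the claim. For $n \geq 1$, the zero-momentum identity $\sum b'_i = \delta_j a_j$ combined with $|a_j|\leq N$ and $|b'_i|\leq N$ on the $m-1-k$ low factors forces the subsum of the $k$ high contributions to satisfy $|\sum_{\mathrm{high}} b'_i| \leq mN$. Consequently $(V_L^{m-1-k}V_H^k)_{\leq N} = (V_L^{m-1-k}\,(V_H^k)^{\leq mN})_{\leq N}$, where $(V_H^k)^{\leq mN}$ denotes Fourier truncation at $mN$. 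The $H^s$-algebra combined with the elementary interpolation
\[
\|h^{\leq mN}\|_s\ \leq\ (mN)^{s-s_0}\,\|h\|_{s_0}
\]
and $\|V_H^k\|_{s_0} \leq C^k\|V_H\|_{s_0}^k$ then produces a bound by $C^m\|c\|_{\ell^\infty}\,N^{s-s_0}\|\xi_{\leq N}\|_s^{m-1-k}\|\xi_{>N}\|_{s_0}^k$, which is dominated by the right-hand side of \eqref{belami} since $N^{s-s_0}\leq N^{n(s-s_0)}$ and $k \geq n$.

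For the high-frequency estimate \eqref{horla}: when $n=0$ the field is spectrally low-frequency by zero momentum, giving $0$. When $n = 1$, every admissible $\kb$ consists of low modes only (the unique forced high mode of $(\bar j,\kb)$ being $\bar j$ itself), so the field at $\langle j\rangle > N$ is the Fourier coefficient at $\pm a_j$ of $V_L^{m-1}$, which is spectrally supported in $\{\langle a\rangle \leq (m-1)N\}$; the bound $\langle a\rangle^{s_0} \leq N^{s_0-s}\langle a\rangle^s$ valid for $\langle a\rangle > N$ (recall $s_0 \leq s$) trades the $H^{s_0}$-norm for $N^{s_0-s}$ times $\|V_L^{m-1}\|_s$, which closes by the $H^s$-algebra. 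For $n \geq 2$, one high mode of $(\bar j,\kb)$ is absorbed into $\bar j$ while the remaining $n-1$ high factors stay in $\kb$; the $H^{s_0}$-tame estimate applied to $V_L^{m-n}V_H^{n-1}$ then directly delivers the stated bound.

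The principal technical subtlety is the Fourier-truncation step in the low-frequency estimate: identifying $|\sum_{\mathrm{high}} b'_i|\leq mN$ is what permits the interpolation gain $N^{s-s_0}$, and this relies crucially on the combination of the zero-momentum constraint with the projection to $\{\langle a\rangle\leq N\}$. The combinatorial bookkeeping of sign assignments $\epsilon_i \in \{\pm 1\}$ and positions of high/low factors among the $m-1$ slots is routine and absorbs into the overall constant $C^m$.
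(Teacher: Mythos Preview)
Your proof is correct and follows essentially the same strategy as the paper: pass to moduli, recognize the derivative of the polynomial as a Fourier coefficient of a product of low/high envelope functions, and close via the Sobolev algebra property. The paper's execution differs only in how the $N^{\pm(s-s_0)}$ factors are extracted: for the low-frequency estimate with $n\geq 1$ the paper simply bounds $\langle\ell\rangle^{s-s_0}\leq N^{s-s_0}$ (since $\langle\ell\rangle\leq N$) and then applies the $H^{s_0}$--algebra to $f^{m-n}g^n$, whereas you route through the Fourier truncation $(V_H^k)^{\leq mN}$ and the $H^s$--algebra; for the high-frequency case $n=1$ the paper uses the zero momentum to locate one mode of size $\gtrsim N/m$ among the $j_i$'s and then weights by $\langle j\rangle^{s-s_0}$, whereas you directly trade $\langle a\rangle^{s_0}\leq N^{s_0-s}\langle a\rangle^{s}$ on the output index. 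Both routes give the same bounds with constants of the form $C^m$.

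One remark: your final domination step ``$N^{s-s_0}\|\xi_{\leq N}\|_s^{m-1-k}\|\xi_{>N}\|_{s_0}^k$ is dominated by the right-hand side of \eqref{belami}'' is immediate only when $k=n$; for $k>n$ the powers do not match without further assumptions. This is harmless here because the lemma is only applied in the paper to the pieces $P_m^{(\circ)},P_m^{(i)},P_m^{(ii)},P_m^{(iii)}$ having \emph{exactly} $0,1,2$ (resp.\ $\geq 3$) high modes, so that only the term $k=n$ actually occurs---the paper's own proof makes the same tacit reading when it writes $f^{m-n}g^n$ rather than a sum over $k\geq n$.
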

\proof
For $z = (\xi,\bar \xi)\in \ell_s^2$, we have 
$$
\Norm{X_{P[c]}(z)_{\leq N}}{s}^2 \leq \sum_{\langle \ell\rangle\leq N}\langle \ell\rangle^{2s}\left|\frac{\partial P[c]}{\partial z_\ell}(z) \right|^2 
\leq  m^2\Norm{c}{\ell^\infty}^2\sum_{\langle \ell\rangle\leq N}\langle \ell\rangle^{2s_0} P_\ell(z)^2,
$$
where 
\begin{equation}
\label{Pell}
P_\ell(z)= \langle \ell\rangle^{s - s_0} \sum_{\substack{\M(\jb)=0 \\ \mu_n(\jb,\ell) > N}}|z_{j_1}|\cdots| z_{j_{m-1}}|. 
\end{equation}
Let $\jb=(j_1,\dots,j_m)$ be given such that 
$\M(\jb,\ell) = 0$. We have 
$$
\label{hernie}
\langle \ell\rangle^{s - s_0} \leq  ( \langle j_1\rangle+\cdots +\langle j_{m-1}\rangle)^{s - s_0} \leq m^{s - s_0} \langle j_1\rangle^{s- s_0}\cdots  \langle j_{m-1}\rangle^{s-s_0}. 
$$
If $P[c]$ contains no high modes, {\em i.e.} if $n = 0$ then we can define $\tilde z_{j} = \langle j \rangle^{s- s_0}z_j$ and we obtain 
$$
P_\ell(z) \leq \sum_{\substack{\M(\jb,\ell)=0 \\†\mu_1(\jb) \leq N}}|\tilde z_{j_1}|\cdots| \tilde z_{j_{m-1}}|. 
$$
For a function $g$ on the torus, we will denote   $\hat g_b=(1/2\pi)^{d}\int_{\T^d}g(x)e^{-ib \cdot x}\dd x$, $b \in \Z^d$, the Fourier coefficients of $g$.
Then denoting
$f(x)=\sum_{\substack{\langle \ell \rangle \leq N}}|\tilde z_\ell| e^{i\delta\, a\cdot x}$
we have 
$$\sum_{\substack{\M(\jb,\ell)=0 \\ \mu_1(\jb) \leq N}}|\tilde z_{j_1}| \cdots| \tilde z_{j_{m-1}}| =  (\widehat{f^{m-1}})_{- \delta a}, \quad \ell = (\delta,a)\in \U_2 \times \Z^d $$
for some constant $C$ depending on $m$. So we get for some generic constant $A$ depending on $s$ and $s_0$ but independent on $m$
\begin{multline*}
\Norm{X_{P[c]}(z)_{\leq N}}{s}^2\leq A m^{s - s_0 + 2}\Norm{c}{\ell^{\infty}}^2 \sum_{a \in \Z^d}  \langle a\rangle^{2s_0}|(f^{m-1})_a|^2 \\
=  A m^{s - s_0 + 2} \Norm{c}{\ell^{\infty}}^2\| f^{m-1}\|_{H^{s_0}}^2 
\leq  A C^m \Norm{c}{\ell^{\infty}}^2 \| f\|_{H^{s_0}}^{2m - 2},
\end{multline*}
where $\Norm{g }{H^{s_0}}$, $s_0 > d/2$ is the usual Sobolev norm on $\T^d$ equivalent to the $\ell_{s_0}^2$ norm of the Fourier coefficients of $g$. Here $C$ depends on $s$ and $s_0$ but not on $m$. 
We then note that $ \Norm{ f}{H^{s_0}}=\Norm{z_{\leq N}}{s}$ which shows the first equation in \eqref{belami}. 

To prove the second in the case $n \geq 1$, we simply bound $\langle \ell\rangle^{s-s_0}
$ by $N^{s-s_0}$ and we obtain 
$$
P_\ell(y ) \leq N^{s - s_0}  \sum_{\substack{\M(\jb,\ell)=0 \\ \mu_n(\jb) > N}}|z_{j_1}|\cdots| z_{j_{m-1}}|  = ( \widehat{f^{m - n} g^n})_{- \delta a}
$$
for $ \ell = - \delta a$
with the same notation as before, where
$$
f(x)=\sum_{†\langle \ell \rangle \leq N}|z_\ell| e^{i\delta\, a\cdot x}\quad\mbox{and}\quad g(x)=\sum_{†\langle \ell \rangle > N}|z_\ell| e^{i\delta\, a\cdot x},
$$
and we conclude as in the previous case. 

To show \eqref{horla} we use 
$$
\Norm{X_{P[c]}(z)_{> N}}{s_0}^2 \leq m^2 \Norm{c}{\ell^\infty}^2\sum_{\langle \ell\rangle> N}\langle \ell\rangle^{2s_0} P_\ell(z)^2,
$$
where
\begin{equation}
\label{Pell2}
P_\ell(z)=  \sum_{\substack{\M(\jb)=0 \\ \mu_n(\jb,\ell) > N}}|z_{j_1}|\cdots| z_{j_{m-1}}|,
\end{equation}
In the case $n = 1$, we have no high mode in the sum. However due to the zero momentum condition, there exists one mode greater than $N/(2m)$, hence with the same notation as before, we have 
$$
P_\ell(z) \leq  A N^{s_0 - s}\sum_{\substack{\M(\jb)=0 \\ \mu_n(\jb,\ell) > N}}|\tilde z_{j_1}|\cdots| \tilde z_{j_{m-1}}|,
$$
and we conclude as before. The proof of the last estimate in \eqref{horla} is similar as the proof of \eqref{belami} for $n\geq 1$ . 
\endproof

\noindent {\bf Proof of Theorem \ref{main}.} 
Let $\varepsilon > 0$, $s \geq s_0 > d/2$ and $r$ be given, and $z(0) \in \ell_{2s}^2$ such that $\Norm{z(0)}{2s} \leq \varepsilon$. 
We apply the normal form Theorem \eqref{normalform} at the order $3r$ and set 
$$N =N_\varepsilon=\eps^{-\frac r{s- s_0}}. $$
Under the hypothesis \eqref{conds}, we have the bound 
\begin{equation}
\label{CFL2}
N^{3r\alpha(3r) +d}\leq \eps^{-1/2}.
\end{equation}
In particular, all the coefficients of the polynomials Hamiltonian $\chi$, $Z_m^{\circ}$, $S_m^{(ii)}$ and $\tilde P_m^{(iii)}$ in \eqref{sansdec} are bounded by $C N^{3r\alpha(3r)} \leq C \varepsilon^{-\frac12}$. 
Hence using Lemma \ref{exist} we have
 for $\varepsilon$ small enough (in a way depending on $r$ and $s$)  that $y(0)  = \Phi_{\chi}^{-1}(z(0))$ is in $\ell_{2s}^2$ and satisfies $\Norm{y(0)}{2s} \leq \frac{5}{4}\varepsilon$. 

This implies in particular that $\Norm{y(0)_{\leq N}}{s} \leq \frac{5}{4}\varepsilon$ and
\begin{multline*}
\Norm{y(0)_{> N}}{s_0}^2  =\sum_{\langle j \rangle > N} \langle j \rangle^{2s_0} |y_j|^2 \leq N^{2s_0 - 4s }\sum_{\langle j \rangle > N} \langle j \rangle^{4s} |y_j|^2 \\ 
\leq  4\varepsilon^2 N^{4(s_0 - s)} \leq 4\varepsilon^{4r + 2}. 
\end{multline*}
Now we need to control the dynamics of $y(t)$ the solution of the Hamiltonian system associated with the Hamiltonian $H\circ  \Phi_{\chi}^1$ given by \eqref{sansdec}.
We define 
\begin{equation}
\label{boot}
T:= \sup\{ t>0\mid  \Norm{y(t)_{\leq N}}{s}\leq \textstyle\frac32\eps \quad \mbox{and} \quad 
\Norm{y(t)_{> N}}{s_0}\leq \eps^{r + 1}
\}.
\end{equation}
We notice that  $\Norm{y(0)}{2s} \leq \frac{5}{4}\varepsilon<\textstyle\frac32\eps$ and $\Norm{y(0)_{>N}}{s_0} \leq 2\varepsilon^{2r + 1}<\eps^{r + 1}$. Therefore by classical results for the definition of mild solutions of semi-linear problems in Sobolev spaces with index greater than $d/2$, $T$ is strictly positive. \\
Let us prove that if $t\leq \min(T,\eps^{-\frac r{s_0+1}})$ then $\Norm{y(t)_{\leq N}}{s}\leq \textstyle\frac{11}8\eps$ and $\Norm{y(t)_{> N}}{s}\leq \frac12 \textstyle\eps^{r+ 1}$. We will then conclude by a continuity argument that  $T\geq \eps^{-\frac r{s_0+1}}$. 

\medskip 
\noindent{\uline{{\em Control of the transformation}}}. 
In view of Lemma \ref{protusion}, under bootstrap hypothesis, the vector field $X_\chi(y(t))$ satisfies the estimates 
$$
\Norm{X_\chi(y)_{\leq N}}{s} \leq C N^{3r \alpha(3r)}( \varepsilon^2 + N^{s - s_0} \varepsilon^{r + 1} \varepsilon) \leq C \varepsilon^{\frac{3}{2}}
$$
where we used $N^{s - s_0} = \varepsilon^{-r}$, and 
$$
\Norm{X_\chi(y)_{\leq N}}{s_0} \leq C N^{3r \alpha(3r)}( N^{s_0- s} \varepsilon^2 +  \varepsilon^{r+1} \varepsilon) \leq C \varepsilon^{r + \frac{3}{2}}. 
$$
Hence this shows that as soon as $y$ satisfies the bootstrap hypothesis, i.e.
 \begin{equation}\label{OhYes}\Norm{y(t)_{\leq N}}{s}\leq \textstyle\frac32\eps\quad \text{and }\quad 
\Norm{y(t)_{> N}}{s_0}\leq \eps^{r + 1},\end{equation}
 we have for $\varepsilon$ small enough and for all $w \in (0,1)$
\begin{equation}
\label{tau}
\Norm{\Phi_\chi^w(y)_{\leq N}}{s} \leq 2 \varepsilon \quad \mbox{and} \quad \Norm{\Phi_\chi^w(y)_{> N}}{s_0} \leq 2\varepsilon^{r+1}. 
\end{equation}

Now let us write the Hamiltonian \eqref{sansdec} as $ H \circ \Phi_{\chi}^1 = H_2  +Z^{(\circ)} +  S^{(ii)} + \tilde P^{(iii)} + \tilde R_{3r+1}$ by gathering together the terms with different homogeneity. 

\medskip
\noindent \uline{{\em Control of the low modes}} $y(t)_{\leq N}$. For $j \in \mathbb{U}_2 \times \Z^d$, let $J_j (t) = J_j(y(t))$ with the definition \eqref{gigi}. 
As $Z^{(\circ)} $ and $H_2$ commute with $J_a$ for all $a \in \Z^d$, we have for $t\leq \min(T,\eps^{-\frac r{s_0+1}})$ and $\langle j \rangle \leq N$, 
$$
\langle j \rangle^{2s}|\dot J_j|=\langle j \rangle^{2s} |\{J_j, \tilde H\}|\leq  \sum_{\langle \ell\rangle=\langle j\rangle} \langle \ell \rangle^{s}\left|\frac{\partial (S^{(ii)} + \tilde P^{(iii)} + \tilde R_{3r+1})}{\partial y_{\bar\ell}}(z)\right| \langle \ell\rangle^{s} |y_\ell|. 
$$
By summing in $\langle j \rangle \leq N$, and using Cauchy-Schwarz inequality, we obtain 
$$
\sum_{\langle j \rangle \leq N} 
\langle j \rangle^{2s}|\dot J_j| \leq \Norm{ X_{S^{(ii)} + \tilde P^{(iii)} + \tilde R_{2r+1}}(y)_{ \leq N}}{s} \Norm{y_{\leq N}}{s} 
$$
Now under bootstrap hypothesis \eqref{OhYes}, we conclude using Lemma \ref{protusion}
$$
\Norm{X_{S^{(ii)} + \tilde P^{(iii)}}(y)_{\leq N}}{s} \leq C N^{3r \alpha(3r)} N^{s - s_0}\varepsilon^{2r + 2} \leq C \varepsilon^{r+\frac{3}{2}}.
$$
On the other hand since $R_{3r+1}$ is a Hamiltonian in $\mathcal H_s$ of order at least $3r+1$,   we have using  \eqref{banff}, \eqref{last?} and Lemma \ref{exist}
$$
\Norm{ X_{\tilde R_{3r+1}}(y)_{ \leq N}}{s} \leq C  N^{3r \alpha(3r)} \varepsilon^{3r}. 
$$
Hence we obtain
$$
\sum_{\langle j \rangle \leq N} 
\langle j \rangle^{2s}|\dot J_j| \leq C ( \varepsilon^{3r + 1} + \varepsilon^{r + \frac{5}{2}}).  
$$
Therefore for $t\leq \min(T,\eps^{-r + 1})$ we conclude 
\begin{align*}\sum_{\langle j \rangle \leq N}\langle j \rangle^{2s}| J_j(t)-J_j(0) |\leq \eps^\frac{7}{2} \end{align*}
for $\eps$ small enough which in turn implies that 
\begin{equation}\label{OhYes1}
\Norm{y(t)_{\leq N}}{s}\leq \textstyle\frac{11}8\eps.
\end{equation}
Furthermore by using estimates on the vector field $X_\chi$, we deduce that $z(t) = \Phi_{\chi}^1(y(t))$ satisfies \eqref{great}. 
\medskip

\noindent
\uline{{\em Control of the high modes}}.
The Hamiltonian $S^{(ii)}$ can be written as 
$$
S_m^{(ii)} = \sum_{\substack{a,b \in \Z^d \\\langle a \rangle > N, \, \langle b \rangle > N}}  B_{ab}(y_{\leq N})\xi_a \bar\xi_b, \quad y = (\xi,\bar \xi). 
$$
As the Hamiltonian is real we have $\overline{B_{ab}(y_{\leq N})} = B_{ba}(y_{\leq N})$, i.e. the operator $A = (B_{ab}(y_{\leq N})_{\langle a \rangle, \langle b \rangle > N}$ acting on $\ell_{s}^2(\Z^d_{>N})$ is Hermitian.
Moreover, we have 
\begin{equation}\label{BB}
B_{ab}(y_{\leq N}) =  \sum_{m = 1}^{2r - 2} \sum_{\substack{\jb \in \Mc_m \\\mu_1(\jb) \leq N\\\Mc(\jb) = a - b}} b_{ab, \jb} y_{\jb}
\end{equation}
where the coefficients $b_{ab,\jb}$ are uniformly bounded by $C N^{3r\alpha(3r)}$. Hence for $s > d/2$ and as soon as $y$ satisfies the bootstrap assumption \eqref{OhYes},  we obtain that 
\begin{equation}
\label{Kab}
\forall\, \langle a \rangle,  \langle b \rangle > N \quad |B_{ab}(y_{\leq N})| \leq C N^{3r\alpha(3r) } \Norm{y_{\leq N}}{s}.  
\end{equation}
When writing the dynamics of $y = (\xi,\bar \xi)$, we get 
$$
\dot \xi_a= -i \omega_a \xi_a- i \sum_{b \in \Z^d}B_{ab}(y_{\leq N})\xi_b - i Q_a(y),\qquad \langle a \rangle > N$$
where $Q_a = \frac{\partial}{\partial \bar \xi_a} (\tilde P^{(iii)}  + \tilde R_{3r +1})$. 
Using \eqref{horla} we have
\begin{equation}
\label{azerty}
\Norm{Q(y)_{> N}}{s_0}\leq C N^{3r \alpha(3r)} ( \varepsilon^{2r + 2} \varepsilon + \eps^{3r}).
\end{equation}
So using the fact that the operator $A$ is hermitian, we get for $t\leq T$ (recall that we assume that $\Norm{y(0)_{> N}}{0}\leq \eps^{2r +1}$)
\begin{equation}\label{V0}\Norm{y(t)_{> N}}{0}=2\Norm{\xi(t)_{>N}}{0}\leq 2\Norm{\xi(0)_{> N}}{0} +tC\eps^{2r + \frac{3}{2}}
 \leq C(1+t)\eps^{2r + 1}.\end{equation}

Let us define $D^{s_0}$ the diagonal operator from $\ell_{s}^2(\Z^d_{>N})$ into $\ell_{s-2s_0}^2(\Z^d_{> N})$ given by
$$D^{s_0}=\diag (\langle a \rangle^{2s_0}, \langle a \rangle> N).$$
With the notation $(\eta,\xi)_{>N} = \sum_{\langle a \rangle > N} \bar \eta_a \xi_a$, we have  
$$
\frac{\dd}{\dd t}(\xi, D^{s_0} \xi)_{> N}=- i(\xi ,[D^{s_0},A(t)]\xi)_{> N} + \mathrm{Im}  (Q(y) , D^{s_0}\xi)_{> N}
$$
where $[A,B]$ denotes the commutator of $A$ and $B$: $[A,B] = AB - BA$. \\
Hence by bootstrap hypothesis (and using \eqref{azerty})
$$
\left|\frac{\dd}{\dd t}(\xi, D^{s_0} \xi)_{>N}\right| \leq 
\big|(\xi,[D^{s_0},A(t)]\xi)_{>N}\big| +
C\eps^{3r + \frac{5}{2}}.
$$ 
Then we note that using the zero momentum condition \eqref{BB} we have $ B_{ab}(t) = 0$ when $|a - b| > 3rN$. Hence, since $s_0>1/2$, we have 
\begin{multline*} \big|(\xi,[D^{s_0},A(t)]\xi)_{>N}\big| =\Big|\sum_{\substack{\langle a \rangle,\langle b \rangle\geq N\\ |a-b|\leq 3rN}}  B_{ab}(y)(\langle a \rangle^{2s_0}-\langle b \rangle^{2s_0})\bar \xi_a\xi_b\Big|\\
\leq \sum_{\substack{\langle a \rangle,\langle b \rangle\geq N\\ |a-b|\leq 3 rN}} 2r|B_{ab}(t)||\langle a \rangle-\langle b \rangle|(\langle a \rangle^{2s_0-1}+\langle b \rangle^{2s_0-1})|\bar \xi_a\xi_b|. 
\end{multline*}
Using the bound \eqref{Kab} and the fact that $|\langle a \rangle-\langle b \rangle| \leq \langle a - b \rangle \leq CN$,  we get
\begin{align}\label{comut}
\big|(\xi,[D^{s_0},A]\xi)\big| \leq  C N^{3r\alpha(3r)} \varepsilon \sum_{\substack{\langle a \rangle,\langle b \rangle\geq N\\ |a-b|\leq 3 rN}}\langle a \rangle^{s_0-1}\langle b \rangle^{s_0}|\bar \xi_a\xi_b|    
\end{align}
where we used that for $|a-b|\leq 3rN$ we have $\langle a \rangle\leq \langle b \rangle+3 rN$.
Now we apply the Cauchy-Schwarz inequality to get
\begin{multline*}
\sum_{\substack{\langle a \rangle,\langle b \rangle\geq N\\ |a-b|\leq 3rN}}\langle a \rangle^{s_0-1}\langle b \rangle^{s_0}|\bar\xi_a\xi_b| \leq \\
\left(  \sum_{\substack{\langle a \rangle,\langle b \rangle\geq N\\ |a-b|\leq 3rN}}\langle a \rangle^{2s_0-2} |\bar \xi_a|^2\right)^{\frac12}
\left(\sum_{\substack{\langle a \rangle,\langle b \rangle\geq N\\ |a-b|\leq 3rN}}\langle b \rangle^{2s_0}|\xi_b|^2\right)^{\frac12}\\
\leq C N^d \Norm{y_{> N}}{s_0-1} \Norm{y_{>N}}{s_0}
\end{multline*}
where we used again that for $|a-b|\leq 3rN$ we have $\langle a \rangle\leq \langle b \rangle+3rN$ with $a \in \Z^d$.
Thus, using \eqref{CFL2}, we obtain 
\begin{align}\label{Vp} \frac{\dd}{\dd t}\Norm{y(t)_{> N}}{s_0}^2\leq C \Norm{y(t)_{> N}}{s_0-1} \Norm{y(t)_{> N}}{s_0}+ C\eps^{3r + \frac{5}{2}}. 
\end{align}
Then, since $s_0>1$, applying the H\"older inequality in \eqref{Vp}, we get
\begin{align}\label{merci_Holder} \frac{\dd}{\dd t}\Norm{y(t)_{> N}}{s_0}^2\leq C \Norm{y(t)_{> N}}{0}^{1/s_0} \Norm{y(t)_{> N}}{s_0}^{2(1-1/s_0)}+ C\eps^{3r + \frac{5}{2}}. 
\end{align}
Using, in \eqref{merci_Holder}, the bound on $\Norm{y(t)_{> N}}{0}$ obtained in \eqref{V0}, we get
\begin{align}\label{ready_for_the_petit_lemme} \frac{\dd}{\dd t}\Norm{y(t)_{> N}}{s_0}^2\leq C (1+t)^{1/s_0} \eps^{\frac{2r + 1}{s_0}} \Norm{y(t)_{> N}}{s_0}^{2(1-\frac1{s_0})}+ C\eps^{3r + \frac{5}{2}}. 
\end{align}

We are going to apply the following elementary lemma 
\begin{lemma}
\label{petit_mais_joli}
Let $\alpha\in (0,1)$, $f:\R \to \R_+$ a continuous function, and $x:\R \to \R_+$ a differentiable function satisfying the inequality
$$
\forall\, t \in \R, \quad  \frac{\dd}{\dd t} x(t) \leq \frac1{1-\alpha}  f(t) (x(t))^{\alpha}.
$$
Then we have the estimate
$$
\forall\, t \in \R, \quad x(t)^{1-\alpha} \leq x(0)^{1-\alpha} + \int_0^t f(s) \, \dd s.
$$
\end{lemma}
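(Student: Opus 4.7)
The plan is to reduce the given Bernoulli-type differential inequality to a linear one via the substitution $y = x^{1-\alpha}$, after which integration is immediate. Since $x$ could a priori vanish at some times and $x^{-\alpha}$ would then be singular, I would first regularize by setting $x_\eps(t) := x(t) + \eps$ for small $\eps > 0$. Then $x_\eps$ is strictly positive and differentiable, and since $u \mapsto u^\alpha$ is increasing on $[0,\infty)$, it satisfies
$$
\dot x_\eps(t) = \dot x(t) \leq \frac{1}{1-\alpha} f(t) x(t)^\alpha \leq \frac{1}{1-\alpha} f(t) x_\eps(t)^\alpha.
$$

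Next I would differentiate $x_\eps^{1-\alpha}$, which is now a well-defined $C^1$ function of $t$, obtaining
$$
\frac{\dd}{\dd t} x_\eps(t)^{1-\alpha} = (1-\alpha) x_\eps(t)^{-\alpha} \dot x_\eps(t) \leq f(t).
$$
Integrating from $0$ to $t$ for $t \geq 0$ (the case $t < 0$ is analogous, after exchanging the endpoints) would yield
$$
x_\eps(t)^{1-\alpha} \leq x_\eps(0)^{1-\alpha} + \int_0^t f(s) \, \dd s,
$$
and letting $\eps \to 0$, using continuity of $u \mapsto u^{1-\alpha}$ on $[0,\infty)$, would give the desired bound.

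The only real subtlety in this argument is the handling of possible zeros of $x(t)$, at which $x^{-\alpha}$ would blow up; this is cleanly resolved by the $\eps$-regularization above. Apart from this minor technical point, the proof is a direct computation and I do not expect any serious obstacle.
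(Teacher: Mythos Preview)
The paper does not actually prove this lemma; it is stated as an ``elementary lemma'' and used immediately. Your argument via the regularization $x_\eps = x + \eps$ and the substitution $y = x_\eps^{1-\alpha}$ is the natural one and is correct for $t \geq 0$, which is the only case used in the paper.

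One remark: your claim that the case $t < 0$ is ``analogous'' is not right. Integrating $\frac{\dd}{\dd t} x_\eps^{1-\alpha} \leq f$ from $t$ to $0$ when $t<0$ yields the \emph{reverse} inequality $x_\eps(t)^{1-\alpha} \geq x_\eps(0)^{1-\alpha} + \int_0^t f$. In fact the lemma as stated is false for $t<0$: take $\alpha = 1/2$, $f \equiv 1$, $x(t) = t^2$; then for $t<0$ one has $\dot x = 2t \leq 2|t| = \frac{1}{1-\alpha} f x^\alpha$, yet the conclusion $|t| \leq t$ fails. This is a minor imprecision in the lemma's statement rather than a defect in your method; the application in the paper concerns only $t \geq 0$.
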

Considering \eqref{ready_for_the_petit_lemme}, we are going to apply Lemma \ref{petit_mais_joli} with $\alpha = 1-1/s_0$ and $x(t) = \Norm{y(t)_{> N}}{s_0}^2 + \eps^{\iota(s_0,r)}$ where $\iota(s_0,r)=(3r + \frac{5}{2}-\frac{2r + 1}{s_0})(1-\frac1{s_0})^{-1}$. Indeed, $x$ naturally satisfies the estimate
$$
\frac{\dd}{\dd t} x(t) \leq C   (1+t)^{\frac1{s_0}} \eps^{\frac{2r + 1}{s_0}}  x(t)^{1-\frac1{s_0}}.
$$
Now applying Lemma \eqref{petit_mais_joli}, we get
$$
x(t) \leq C x(0) + C \eps^{2r + 1} (1+t)^{s_0+1} .
$$
Since we have assumed that $\Norm{y(0)_{>N}}{s_0}\leq \eps^{2r + 1}$ and since a straightforward estimate proves that $\iota(s_0,r)\geq 2r+1$, we deduce that 
$$\Norm{y(t)_{>N}}{s_0}\leq C\eps^{2r + 1}(1+t)^{s_0+1}$$
and hence 
 for $t\leq \min(T,\eps^{-\frac r{(s_0+1)}})$ and $\eps$ small enough
\begin{equation}\label{OhYes2}\|y(t)_{>N}\|_{s_0}\leq \frac12 \eps^{r + 1}.\end{equation}
Hence combining \eqref{OhYes1} and \eqref{OhYes2}, we conclude by continuity argument that $T\geq \eps^{-\frac r{(s_0+1)}}$ which finishes the proof. 
\endproof


\begin{thebibliography}{10}


\bibitem[BFG]{BFG}
{\rm J. Bernier, E. Faou and B. Gr\'ebert}, 
{\em Rational normal forms and stability of small solutions to nonlinear Schr\"odinger equations}. 
arXiv:1812.11414


\bibitem[Bam99a]{Bam99}
D.~Bambusi,  \emph{On long time stability in {H}amiltonian perturbations
of non-resonant linear {PDE}s}, Nonlinearity \textbf{12} (1999),
  823--850.
  
  
  
  

\bibitem[Bam03]{Bam03}
D.~Bambusi, \emph{Birkhoff normal form for some nonlinear {PDE}s}, Comm. Math.
  Physics \textbf{234} (2003), 253--283.

\bibitem[Bam07]{Bam07}
D.~Bambusi, {\it A Birkhoff normal form theorem for some semilinear {PDE}s},
  Hamiltonian Dynamical Systems and Applications, Springer, 2007, pp.~213--247.
  

\bibitem[BDGS07]{BDGS07}
D.~Bambusi, J.-M. Delort, B.~Gr{\'e}bert, and J.~Szeftel, {\it Almost global
  existence for {H}amiltonian semilinear {K}lein-{G}ordon equations with small
  {C}auchy data on {Z}oll manifolds}, Comm. Pure Appl. Math. \textbf{60}
  (2007), no.~11, 1665--1690. 

\bibitem[BG03]{BG03}
{\rm D. Bambusi and B. Gr\'ebert},
{\em   Forme normale pour NLS en dimension quelconque}. C. R. Acad. Sci. Paris, Ser. I \textbf{337} (2003) 409--414.

\bibitem[BG06]{BG06}

{\em Birkhoff normal form for PDE's with tame modulus}. Duke Math. J.  \textbf{135}  no. 3 (2006), 507--567.
 
 
 
 
 \bibitem[Bou95]{Bou95}
{\rm J. Bourgain}, {\em Construction of periodic solutions of nonlinear wave equations in higher
dimension} Geom. Funct. Anal. \textbf{5}  (1995) 629--639. 



\bibitem[Bou96]{Bou96}
{\rm J. Bourgain},
{\em Construction of approximative and almost-periodic solutions of perturbed linear Schr\"odinger and wave equations. }
Geom. Funct. Anal. \textbf{6} (1996) 201--230. 

\bibitem[Bou97]{Bou97}
{\rm J. Bourgain}, {\em On Melnikov's persistency problem}, Math. Res. Lett. \textbf{4} (1997) 445--458.




\bibitem[Caz03]{Caz03}
{\rm T. Cazenave},
{\em Semilinear Schr\"odinger equations}. Courant Lecture Notes in Mathematics, 10. New York University, Courant Institute of Mathematical Sciences, New York; American Mathematical Society, Providence, RI, 2003.


%
%
%

\bibitem[CHL08]{CHL08}
{\rm D. Cohen, E. Hairer, C. Lubich}, {\em Conservation of energy, momentum and actions in numerical discretizations of nonlinear wave equations}, Numer. Math. \textbf{110} (2008), 113--143.

\bibitem[DF09]{DF09}
{\rm E. Faou and A. Debussche}
{\em Modified energy for split-step methods applied to the linear Schr\"odinger equation}. 
SIAM J. Numer. Anal. \textbf{47} (2009) 3705-3719


\bibitem[Del09]{Del09}
{\rm J.-M. Delort} 
{\em On long time existence for small solutions of semi-linear
Klein-Gordon equations on the torus}. Journal d'Analyse Math\'ematiques {\bf 107} 161--194. 

\bibitem[DS06]{DS06}
{\rm J.-M. Delort and J. Szeftel}, {\em  Long-time existence for semi-linear Klein-Gordon equations
with small Cauchy data on Zoll manifolds}, Amer. J. Math. \textbf{128} (2006), no. 5, 1187--1218.



\bibitem[DF07]{DF07}
{\rm G. Dujardin and E. Faou}, 
{\em Normal form and long time analysis of splitting schemes for the linear Schr\"odinger equation with small potential}. 
Numer. Math. \textbf{106}  (2007) 223-262 

\bibitem[Eli88]{Eli88}
{\rm L.H. Eliasson}
{\em Perturbations of stable invariant tori for Hamimltonian systems},
AAnn. Scuola. Norm.
Sup. Pisa Cl. Sci. \textbf{15} (1988) 115--147. 

\bibitem[EGK16]{EGK16}
{\rm L.H. Eliasson, B. Gr\'ebert, S. B. Kuksin},
{\em KAM for non-linear beam equation},  
Geometric And Functional Analysis \textbf{26} (2016), 1588--1715. 

\bibitem[Fao12]{Fao12}
{\rm E Faou}.
{\em Geometric numerical integration and Schr\"odinger equations}. 
European Math. Soc. 2012


\bibitem[FG13]{FG13}
E. Faou and B. Gr\'ebert, \emph{A Nekhoroshev-type theorem for the nonlinear Schr\"odinger equation on the torus.} Analysis \& PDE \textbf{6} (2013),1243--1262. 

\bibitem[FGP10a]{FGP10a}
{\rm E. Faou, B. Gr\'ebert and E. Paturel}, 
{\em Birkhoff normal form for splitting methods applied to semi linear Hamiltonian PDEs. Part I: Finite dimensional discretization}. 
Numer. Math. \textbf{114} (2010) 429--458


\bibitem[FGP10b]{FGP10b}
{\rm E. Faou, B. Gr\'ebert and E. Paturel}
{\em Birkhoff normal form for splitting methods applied to semi linear Hamiltonian PDEs. Part II: Abstract splitting}. 
Numer. Math. \textbf{114} (2010) 459--490

\bibitem[FGL13]{FGL13}
{\rm E. Faou, L. Gauckler and C. Lubich}
{\em Sobolev stability of plane wave solutions to the cubic nonlinear Schr\"dinger equation on a torus}, 
Comm. PDE \textbf{38} (2013) 1123-1140

\bibitem[FGL14]{FGL14}
{\rm E. Faou, L. Gauckler and C. Lubich}
{\em Plane wave stability of the split-step Fourier method for the nonlinear Schr\"odinger equation}
Forum of Math. Sigma \textbf{2} (2014) 

\bibitem[Gre07]{Gre07}
{\rm B. Gr\'ebert},  
{\em Birkhoff normal form and Hamiltonian PDEs.}
S\'eminaires et Congr\`es \textbf{15} (2007), 1--46. 


\bibitem[GIP09]{GIP09}
 B. Gr\'ebert, R. Imekraz and \'E. Paturel,
 \newblock Normal forms for semilinear quantum harmonic oscillators.
  \newblock{\it Commun. Math. Phys.}  \textbf{291}  (2009) 763--798.


\bibitem[Kuk93]{Kuk93}
{\rm S. B. Kuksin}, {\em Nearly Integrable Infinite Dimensional Hamiltonian Systems,} Lecture Notes in Math.,
Vol. 1556, Springer, Berlin, 1993.
%

\bibitem[HLW06]{HLW06}
{\rm E. Hairer, C. Lubich and G. Wanner}
{\em Geometric Numerical Integration, 
Structure-Preserving Algorithms for Ordinary Differential Equations}. Springer (2006). 


\bibitem[HS17]{HS17}
{\rm M. Hofmanov\'a and K. Schratz},
{\em An exponential-type integrator for the KdV equation.}
Numer. Math. \textbf{136} (2017) 1117--1137. 


\bibitem[Mel65]{Mel65}
{\rm V.K. Mel'nikov}, {\em  On some cases of conservation of conditionally periodic motions
under a small change of the Hamiltonian function},  Soviet Mathematics Doklady, \textbf{6} 
(1965) 1592--1596.

\bibitem[Mel68]{Mel68}
{\rm  V.K. Mel'nikov}, {\em A family of conditionally periodic solutions of a Hamiltonian system},
Soviet Mathematics Doklady, \textbf{9} (1968), No. 4, 882-885.

\bibitem[Mos67]{Mos67}
{\rm J. Moser}, {\em Convergent series expansions for quasiperiodic motions},  Mathematische
Annalen \textbf{169} (1967), 136--176.

\bibitem[OS18]{OS18}
{\rm A. Ostermann and K. Schratz}
{\em Low regularity exponential-type integrators for semilinear Schr\"odinger equations}.
Found. Comput. Math. \textbf{18} (2018) 731--755. 


\bibitem[\mbox{P\"os96}]{Pos96}
{\rm J. P\"oschel}, {\em A KAM-theorem for some nonlinear partial differential equations}, Ann. Scuola. Norm.
Sup. Pisa Cl. Sci. \textbf{23} (1996) 119--148.



\bibitem[Sha00]{Sha00}
{\rm Z. Shang}, {\em Resonant and Diophantine step sizes in computing invariant tori of Hamiltonian
systems}, Nonlinearity 13 (2000) 299--308.

\bibitem[XY01]{XY01}
{\rm Junxiang Xu and  Jiangong You},
{\em Persistence of lower-dimensional tori under the first Melnikov's non-resonance condition}, 
J. Math. Pures Appl \textbf{80} (2001) 1047-1067. 
	
\end{thebibliography}
\end{document}